\documentclass
{amsart}
\usepackage{graphicx}
\vfuzz2pt 
\hfuzz2pt 
\newtheorem{thm}{Theorem}[section]
\theoremstyle{definition}
\newtheorem{cor}[thm]{Corollary}
\newtheorem{prop}[thm]{Proposition}
\newtheorem{defn}[thm]{Definition}
\newtheorem{lem}[thm]{Lemma}

\newtheorem{rem}[thm]{Remark}
\newtheorem{ex}[thm]{Example}

\numberwithin{equation}{section}
\begin{document}
\title[The duals of annihilator conditions for modules]
{The duals of annihilator conditions for modules}\footnotetext{This research was supported with a grant from Farhangian University}
\author{Faranak  Farshadifar}
\address{Department of Mathematics, Farhangian University, Tehran, Iran.}
\email{f.farshadifar@cfu.ac.ir}

\subjclass[2010]{13C13, 13C99}%
\keywords {Finitely generated, dual of  Property $\mathcal{A}$,  dual of strong  Property $\mathcal{A}$, dual of proper strong Property $\mathcal{A}$, Property $\mathcal{S_J(N)}$, Property $\mathcal{I^M_J(N)}$}

\begin{abstract}
Let $R$ be a commutative ring with identity and let $M$ be an $R$-module.
The purpose of this paper is to introduce and investigate the submodules of an $R$-module $M$ which satisfy the dual of  Property $\mathcal{A}$,  the dual of strong  Property $\mathcal{A}$,  and the dual of proper strong  Property $\mathcal{A}$. Moreover, a submodule $N$ of $M$ which satisfy Property $\mathcal{S_J(N)}$ and Property $\mathcal{I^M_J(N)}$ will be introduced and investigated.
\end{abstract}
\maketitle

\section{Introduction}
\noindent
Throughout this paper, $R$ will denote a commutative ring with
identity and $\Bbb Z$ will denote the ring of integers.

Let $M$ be an $R$-module. The set of zero divisors of $R$
on $M$ is $Z_R(M)= \{r \in R| rm=0 \ for \ some\ nonzero\ m \in M\}$ and the set of torsion elements
of $M$ with respect to $R$ is $T_R(M) = \{m \in M |rm = 0 \ for \ some\ 0 \not= r \in R\}$.

An $R$-module $M$ satisfies \textit{Property $\mathcal{A}$} (resp., \textit{Property $\mathcal{T}$}) if for every
finitely generated ideal $I$ of $R$ (resp., finitely generated submodule $N$ of $M$)
with $I \subseteq Z_R(M)$ (resp., $N \subseteq T_R(M)$), there exists a nonzero $m \in M$
(resp., $r \in R$) with $Im = 0$ (resp., $rN = 0$), or equivalently
$(0:_MI) \not =0$ (resp., $Ann_R(N) \not=0$) \cite{MR3661610}.
An $R$-module $M$ satisfies \textit{strong  Property $\mathcal{A}$} (resp., \textit{strong Property $\mathcal{T}$}) if
for any $r_1, . . . , r_n\in Z_R(M)$ (resp., $m_1, . . .,m_n \in T_R(M)$), there exists a non-zero $m\in M$ (resp., $r \in R$) with
$r_1m =\cdots = r_nm = 0$ (resp., $rm_1 =\cdots = rm_n = 0$)  \cite{MR3661610}.
An $R$-module $M$ satisfies \textit{proper strong  Property $\mathcal{A}$}
if for any proper finitely generated ideal $I = \langle a_1, a_2, . . . , a_n\rangle $
of $R$ such that $a_i \in Z_R(M)$ we have $(0:_MI) \not =0$ \cite{MR4160990}. The
class of modules satisfies proper strong  Property $\mathcal{A}$ lying properly between the class of modules
satisfies strong Property $\mathcal{A}$ and Property $\mathcal{A}$ \cite[Corollary 2.12]{MR4160990}.

Let $M$ be an $R$-module. The subset $W_R(M)$ of $R$ (that is the dual notion of $Z_R(M)$) is defined by $\{r \in R| rM \not =M\}$ \cite{Ya98}.
A non-zero submodule $N$ of  $M$ is said to be \emph{secondal} if $W_R(N)$ is an
ideal of $R$. In this case, $W_R(N)$ is a prime ideal of $R$ \cite{MR2821719}.

Recently, the annihilator conditions on modules over commutative rings have attracted the attention of several researchers. A brief history of this can be found in \cite{MR3661610, MR4160990}.
The purpose of this paper is to introduce and study the  dual of  Property $\mathcal{A}$, the dual of strong  Property $\mathcal{A}$, and the dual of proper strong Property $\mathcal{A}$ for modules over a commutative ring. Also, for a submodule $N$ of an $R$-module $M$ we introduce and investigate the Properties $\mathcal{S_J(N)}$ and $\mathcal{I^M_J(N)}$.
Some of the results in this article are dual of the results for
Property $\mathcal{A}$, strong  Property $\mathcal{A}$, and proper strong Property $\mathcal{A}$
considered in \cite{MR4160990} and \cite{MR3661610}.
\section{The duals of  Property $\mathcal{A}$ and strong  Property $\mathcal{A}$  for modules}
\begin{defn}\label{d1.1}
We say that an $R$-module $M$ satisfies the \textit{dual of  Property $\mathcal{A}$}  if for each finitely generated ideal $I$ of $R$ with
$I \subseteq W_R(M)$ we have $IM \not =M$.
\end{defn}

A proper submodule $N$ of an $R$-module
$M$ is said to be \emph{completely irreducible} if $N=\bigcap _
{i \in I}N_i$, where $ \{ N_i \}_{i \in I}$ is a family of
submodules of $M$, implies that $N=N_i$ for some $i \in I$. It is
easy to see that every submodule of $M$ is an intersection of
completely irreducible submodules of $M$ \cite{FHo06}.

\begin{defn}\label{d21.1}
We say that an $R$-module $M$ satisfies \textit{the dual of strong  Property $\mathcal{A}$}  if
for any $a_1, . . . , a_n \in W_R(M)$,
there exists a completely irreducible submodule $L$ of $M$ such that  $a_iM\subseteq L \not=M$ for $i=1,2,..,n$.
\end{defn}

Clearly, if an $R$-module $M$ satisfies the dual of strong Property $\mathcal{A}$, then $M$ satisfies the dual of Property  $\mathcal{A}$. Nevertheless, the following example shows that the converse is not true in general.
\begin{ex}\label{e121.1}
The $\Bbb Z$-module $\Bbb Z$ satisfies the dual of Property  $\mathcal{A}$ but does not satisfies the dual of strong Property $\mathcal{A}$.
\end{ex}

\begin{rem}\label{r2.1}
Let $N$ and $K$ be two submodules of an $R$-module $M$. To prove $N\subseteq K$, it is enough to show that if $L$ is a completely irreducible submodule of $M$ such that $K\subseteq L$, then $N\subseteq L$ \cite{MR2821719}.
\end{rem}
\begin{thm}\label{tt1.5}
Let $M$ be an $R$-module. Consider the conditions:
\begin{itemize}
\item [(a)] $M$ satisfies the dual of strong  Property $\mathcal{A}$;
\item [(b)] $M$  is a secondal $R$-module.
\end{itemize}
Then $(a)\Rightarrow (b)$. If further $R$ is a $PID$, then $(b)\Rightarrow (a)$.
\end{thm}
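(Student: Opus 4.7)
For the direction $(a)\Rightarrow(b)$, the plan is to verify directly that $W_R(M)$ is closed under addition and under multiplication by ring elements, which is exactly what is needed for it to be an ideal. Closure under scalar multiplication is automatic and does not use hypothesis (a): if $a\in W_R(M)$ and $r\in R$, then $(ra)M\subseteq aM\subsetneq M$, so $ra\in W_R(M)$. For closure under addition, given $a,b\in W_R(M)$, I would apply the dual of strong Property $\mathcal{A}$ to the pair $\{a,b\}$ to obtain a completely irreducible submodule $L$ with $aM\subseteq L$, $bM\subseteq L$, and $L\neq M$. Then $(a+b)M\subseteq aM+bM\subseteq L\neq M$, giving $a+b\in W_R(M)$.

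For the direction $(b)\Rightarrow(a)$ under the PID hypothesis, the plan is to reduce a finite set of elements of $W_R(M)$ to a single generator. Given $a_1,\dots,a_n\in W_R(M)$, since $W_R(M)$ is an ideal by assumption, the finitely generated ideal $\langle a_1,\dots,a_n\rangle$ lies in $W_R(M)$; as $R$ is a PID, write $\langle a_1,\dots,a_n\rangle=\langle d\rangle$ for some $d\in R$. Then $d\in W_R(M)$, so $dM\neq M$. Because every submodule of $M$ is an intersection of completely irreducible submodules of $M$, writing $dM=\bigcap_{j}L_j$ with each $L_j$ completely irreducible forces at least one $L_j\neq M$ (otherwise the intersection would be $M$). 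Pick such an $L=L_j$; then $dM\subseteq L\neq M$. Finally, each $a_i$ is a multiple of $d$, so $a_iM\subseteq dM\subseteq L$, which establishes the dual of strong Property $\mathcal{A}$.

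The key obstacle, and the reason the PID hypothesis appears in the converse, is that the definition of the dual of strong Property $\mathcal{A}$ demands a single completely irreducible submodule $L$ that simultaneously contains all the $a_iM$. Merely knowing that $W_R(M)$ is an ideal gives, for each $i$ individually, some completely irreducible $L_i\neq M$ with $a_iM\subseteq L_i$, but these $L_i$ need not coincide or even have a common upper bound that is still properly contained in $M$. The PID assumption circumvents this by collapsing the problem to a single generator $d$, after which one completely irreducible submodule containing $dM$ handles every $a_i$ at once. This also highlights why Example~\ref{e121.1} is compatible with the theorem: for $R=M=\mathbb{Z}$ one has $W_{\mathbb{Z}}(\mathbb{Z})=\mathbb{Z}\setminus\{\pm 1\}$, which is not an ideal, so $\mathbb{Z}$ is not secondal and the converse implication is vacuous for this example rather than violated.
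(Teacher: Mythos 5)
Your proposal is correct and follows essentially the same route as the paper: for $(a)\Rightarrow(b)$ the paper likewise applies the hypothesis to the pair $a,b$ to get a single completely irreducible $L\neq M$ containing both $aM$ and $bM$ (using $a-b$ where you use $a+b$, and leaving closure under scalar multiplication implicit), and for $(b)\Rightarrow(a)$ it likewise collapses $\langle a_1,\dots,a_n\rangle$ to a principal ideal $\langle d\rangle$ and takes a completely irreducible $L\neq M$ above $dM$ via the intersection representation (cited there as Remark~\ref{r2.1}). Your closing remarks on where the PID hypothesis is used and on Example~\ref{e121.1} are accurate but not part of the proof.
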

\begin{proof}
 $(a)\Rightarrow (b)$.
 Let $a , b \in W_R(M)$. Then by part (a),  there exists a completely irreducible submodule $L$ of $M$ such that $aM\subseteq L \not=M$ and $bM\subseteq L \not=M$. Thus $(a-b)M \subseteq L \not=M$ and so $a-b \in W_R(M)$. This implies that  $M$  is a secondal $R$-module.

$(b)\Rightarrow (a)$.
Let $a_1, . . . , a_n \in W_R(M)$. Then by part (b), $\langle a_1, . . . , a_n\rangle$ is an ideal of $R$. As $R$ is a $PID$, there exists an $a \in R$ such that $\langle a_1, . . . , a_n\rangle=Ra$. Thus $a \in W_R(M)$. Hence there exists a completely irreducible submodule $L$ of $M$ such that $aM \subseteq L \not=M$ by Remark \ref{r2.1}. This implies that  $a_iM\subseteq L \not=M$ for $i=1,2,..,n$, as needed.
\end{proof}

\begin{thm}\label{t1.3}
Let $f : R \rightarrow \acute{R}$ be a homomorphism of commutative rings and
let $M$ be an $\acute{R}$-module. Consider $M$ as an $R$-module with $rm := f(r)m$ for
$r \in R$ and $m \in M$.
\begin{itemize}
\item [(a)] Suppose for each (finitely generated) ideal $I$ of  $R$, $f(I)\acute{R} = \{f(i)\acute{r} | i \in I, \acute{r} \in \acute{R} \}$ (e.g., $f$ is surjective or $f : R \rightarrow R_N$, $f(r) = r/1$, where $N$ is
a multiplicatively closed subset of $R$). Then $M$ satisfies  the dual of  Property $\mathcal{A}$   as an
$\acute{R}$-module implies $M$ satisfies  the dual of  Property $\mathcal{A}$   as an $R$-module.
\item [(b)]  Suppose that every (finitely generated) ideal $J$ of $\acute{R}$ has the form $J = f(I)\acute{R}$
for some (finitely generated) ideal $I$ of $R$ (e.g., $f$ is surjective or $f : R \rightarrow
R_N$, $f(r) = r/1$, where $N$ is a multiplicatively closed subset of $R$). Then
$M$ satisfies  the dual of  Property $\mathcal{A}$   as an $R$-module implies $M$ satisfies  the dual of  Property $\mathcal{A}$   as
an $\acute{R}$-module.
\end{itemize}
\end{thm}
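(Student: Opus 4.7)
My plan is to reduce both directions to the basic identity $IM = f(I)M = (f(I)\acute{R})M$, which holds for any ideal $I$ of $R$ purely from the definition of the $R$-action via $f$. Together with the pointwise identity $aM = f(a)M$, this already gives $a \in W_R(M)$ if and only if $f(a) \in W_{\acute{R}}(M)$, and translates the test ``$IM \neq M$'' into the test ``$(f(I)\acute{R})M \neq M$.'' So both parts become bookkeeping: push a finitely generated ideal across $f$, check the $W$-containment, and invoke the hypothesis on the other side.

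For (a), I would start from a finitely generated ideal $I = \langle a_1, \ldots, a_n \rangle \subseteq W_R(M)$. The induced ideal $J := f(I)\acute{R}$ is finitely generated by $f(a_1), \ldots, f(a_n)$, so to apply the dual of Property $\mathcal{A}$ for the $\acute{R}$-module $M$ to $J$, I need $J \subseteq W_{\acute{R}}(M)$. This is where the hypothesis on $f$ enters: it guarantees that every $\acute{s} \in J$ has the form $\acute{r} f(a)$ with $a \in I$ and $\acute{r} \in \acute{R}$. For such $\acute{s}$, note that $f(a)M = aM$ is an $\acute{R}$-submodule of $M$, so $\acute{s} M = \acute{r}(f(a)M) \subseteq f(a)M = aM \neq M$, the last inequality because $a \in I \subseteq W_R(M)$. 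Hence $J \subseteq W_{\acute{R}}(M)$, the hypothesis yields $JM \neq M$, and the identity $JM = (f(I)\acute{R})M = IM$ finishes (a).

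For (b), given a finitely generated $J \subseteq W_{\acute{R}}(M)$ in $\acute{R}$, the hypothesis on $f$ lets me write $J = f(I)\acute{R}$ with $I$ a finitely generated ideal of $R$. For any $a \in I$, $f(a) \in f(I) \subseteq J \subseteq W_{\acute{R}}(M)$, so $aM = f(a)M \neq M$ and hence $a \in W_R(M)$; that is, $I \subseteq W_R(M)$. The $R$-module hypothesis then forces $IM \neq M$, and since $(f(I)\acute{R})M = IM$, we conclude $JM \neq M$.

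The only subtle point is the verification in (a) that $J \subseteq W_{\acute{R}}(M)$. Since $W_{\acute{R}}(M)$ is in general not closed under addition, one cannot test this by checking only the generators $f(a_i)$; the ``single product'' form of the hypothesis on $f$ is exactly what sidesteps this obstruction, by guaranteeing that every element of $J$, not merely every generator, already comes from the $W_R(M)$-side. In (b), by contrast, the argument is essentially formal once the ideal $J$ has been written as $f(I)\acute{R}$.
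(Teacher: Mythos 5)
Your proof is correct and follows essentially the same route as the paper's: in both parts you verify the containment of the pushed-forward ideal in the relevant $W$-set elementwise (using, in (a), that every element of $f(I)\acute{R}$ has the single-product form $f(a)\acute{r}$, so that $\acute{r}f(a)M\subseteq f(a)M=aM\neq M$), and then transfer the conclusion via the identity $IM=f(I)M=(f(I)\acute{R})M$. Your closing remark about why one cannot merely test the generators of $f(I)\acute{R}$ is exactly the point the paper's hypothesis in (a) is designed to handle.
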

\begin{proof}
(a) Suppose $M$ satisfies  the dual of  Property $\mathcal{A}$  as an $\acute{R}$-module. Let $I$ be an ideal
of $R$ with $I \subseteq W_R(M)$. So for $i \in I$, there is a $m \in M\setminus iM=M\setminus f(i)M$. Hence,
$f(i) \in W_{\acute{R}}(M)$ and so $\acute{r}f(i) \in W_{\acute{R}}(M)$ for each $\acute{r} \in \acute{R}$. Thus $f(I)\acute{R} = \{f(i)\acute{r} | i \in
I, \acute{r} \in \acute{R}\}$ is an ideal of $\acute{R}$ with $f(I)\acute{R} \subseteq W_{\acute{R}}(M)$. Suppose that $I$ is finitely generated.
Then $f(I)\acute{R}$ is finitely generated. Hence there is a $ m \in M\setminus f(I)\acute{R}M$.
This implies that $ m \in M\setminus IM$. Thus $M$ satisfies the dual of  Property $\mathcal{A}$   as an $R$-module.

(b) Suppose that $M$ satisfies the dual of  Property $\mathcal{A}$   as an R-module. Let J be an ideal
of $\acute{R}$ with $J \subseteq W_{\acute{R}}(M)$. Then there is an ideal $I$ of $R$ with $J = f(I)S$. For $i \in I$,
$f(i) \in W_{\acute{R}}(M)$. So, there is a $m \in M\setminus f(i)M=M\setminus iM$. So, $I \subseteq W_R(M)$. If
$J$ is finitely generated, we can choose $I$ to be finitely generated. Since $M$ satisfies
the dual of  Property $\mathcal{A}$   as an $R$-module, there is a $m \in M\setminus IM$. It follows that $m \in M\setminus f(I)\acute{R}M=M\setminus JM$. So, $M$ satisfies the dual of  Property $\mathcal{A}$   as an $\acute{R}$-module.
\end{proof}

\begin{cor}\label{t31.3}
Let $M$ be an $R$-module, $J \subseteq Ann_R(M)$ an ideal of $R$, and put $\bar{R} = R/J$. Then $M$ satisfies the dual of  Property $\mathcal{A}$ as an
$R$-module if and only if $M$ satisfies the dual of  Property $\mathcal{A}$   as an $\bar{R}$-module. In particular, $M$ satisfies the dual of  Property $\mathcal{A}$
as an $R$-module if and only if $M$ satisfies the dual of  Property $\mathcal{A}$   as an $R/Ann_R(M)$-module.
\end{cor}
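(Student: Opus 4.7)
The plan is to deduce this corollary as a direct application of Theorem \ref{t1.3} to the canonical surjection $\pi: R \to \bar{R} = R/J$, $r \mapsto r+J$. First I would verify that the setup of Theorem \ref{t1.3} is in force: since $J \subseteq \mathrm{Ann}_R(M)$, every element of $J$ acts as zero on $M$, so the $R$-action on $M$ factors through $\bar{R}$ and gives a well-defined $\bar{R}$-module structure satisfying $rm = \pi(r)m$ for all $r \in R$ and $m \in M$. This is precisely the compatibility of module structures required to view $M$ simultaneously as an $R$-module and an $\bar{R}$-module in the theorem.

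Next I would check that the surjectivity of $\pi$ makes both structural hypotheses in Theorem \ref{t1.3}(a) and (b) trivially satisfied. For (a), given any ideal $I$ of $R$ we have $\pi(I)\bar{R} = \{\pi(i)\bar{r} : i \in I,\ \bar{r} \in \bar{R}\}$ because every element of $\bar{R}$ is of the form $\pi(r)$, and $\pi(i)\pi(r) = \pi(ir) \in \pi(I)$. For (b), every ideal $\bar{J}$ of $\bar{R}$ has the form $\pi(I)\bar{R} = \pi(I)$ for $I := \pi^{-1}(\bar{J})$, an ideal of $R$ containing $J$; moreover, if $\bar{J}$ is finitely generated, then choosing preimages in $R$ of a finite generating set gives a finitely generated ideal $I_0 \subseteq R$ with $\pi(I_0) = \bar{J}$, so $I$ may be taken finitely generated.

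With these verifications in hand, the forward direction follows from Theorem \ref{t1.3}(b) and the reverse direction from Theorem \ref{t1.3}(a). The ``in particular'' clause is just the specialization $J = \mathrm{Ann}_R(M)$, which automatically satisfies $J \subseteq \mathrm{Ann}_R(M)$. There is no substantive obstacle here; the entire proof consists of recognizing that the theorem already covers this case once the module-structure compatibility is noted.
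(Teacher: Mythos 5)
Your proposal is correct and follows exactly the same route as the paper, which simply cites Theorem \ref{t1.3}; you have merely filled in the (routine) verification that the canonical surjection $\pi : R \to R/J$ satisfies the hypotheses of both parts of that theorem and that $J \subseteq \mathrm{Ann}_R(M)$ makes $M$ a well-defined $\bar{R}$-module compatible with its $R$-structure. Nothing further is needed.
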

\begin{proof}
 This follows from Theorem \ref{t1.3}.
\end{proof}

Recall that an $R$-module $M$ is said to be \emph{Hopfian} (resp. \emph{co-Hopfian})
if every surjective (resp. injective) endomorphism $f$ of $M$ is an isomorphism.

An $R$-module $M$ is said to be a \emph{multiplication module} if for every submodule $N$ of $M$ there exists an ideal $I$ of $R$ such that $N=IM$ \cite{Ba81}.

A submodule $N$ of an $R$-module $M$ is said to be \emph{idempotent} if $N=(N:_RM)^2M$. Also, $M$ is said to be \emph{fully idempotent}
if every submodule of $M$ is idempotent \cite{AF122}.

An $R$-module $M$ is said to be a \emph{comultiplication module} if for every submodule $N$ of $M$ there exists an ideal $I$ of $R$ such that $N=(0:_MI)$ \cite{AF07}.  $R$ is said to be a \textit{comultiplication
ring} if, as an $R$-module, $R$ is a comultiplication $R$-module \cite{AF08}.

A submodule $N$ of an $R$-module $M$
is said to be \emph{coidempotent} if $N=(0:_MAnn_R(N)^2)$. Also,
 an $R$-module $M$ is said to be \emph{fully coidempotent}
if every submodule of $M$ is coidempotent \cite{AF122}.

\begin{prop}\label{l1.2}
Let $M$ be an $R$-module. Then we have the following.
\begin{itemize}
  \item [(a)] If $R$ is a comultiplication ring and $M$ is a faithful $R$-module, then $M$ satisfies  Property $\mathcal{A}$  and the dual of  Property $\mathcal{A}$.
  \item [(b)] If $M$ is a Hopfian comultiplication (in particular, $M$ is a fully coidempotent) $R$-module and satisfies the dual of  Property $\mathcal{A}$, then $M$ satisfies  Property $\mathcal{A}$.
  \item [(c)] If $M$ is a co-Hopfian multiplication (in particular, $M$ is a fully idempotent) $R$-module and satisfies Property $\mathcal{A}$, then $M$ satisfies the dual of  Property $\mathcal{A}$.
  \item [(d)] If $R$ is a principal ideal ring, then $M$ satisfies the dual of strong  Property $\mathcal{A}$.
\end{itemize}
\end{prop}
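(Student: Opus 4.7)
The plan is to treat (a)--(d) separately but with a common theme: when $M$ carries enough (co)multiplication structure, the conditions ``$(0:_MI)\neq 0$'' and ``$IM\neq M$'' become interchangeable through annihilator/submodule duality, and the non-faithful case is handled by passing to $\bar R = R/Ann_R(M)$ via Corollary~\ref{t31.3} (and its evident analogue for Property~$\mathcal A$).

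For (a), the key ring-theoretic fact is that in a comultiplication ring $I = Ann_R(Ann_R(I))$ for every ideal $I$, so every proper ideal has a nonzero annihilator. Given a finitely generated $I$ contained in $Z_R(M)$ or in $W_R(M)$, the ideal $I$ is proper because $1$ lies in neither set (assuming $M\neq 0$). Choose $0\neq r\in Ann_R(I)$; faithfulness of $M$ gives $rM\neq 0$. For Property~$\mathcal A$, any nonzero $rm$ is killed by $I$, so $(0:_M I)\neq 0$. For the dual, if $IM = M$ then $rM = r(IM) = (rI)M = 0$, contradicting faithfulness; hence $IM\neq M$.

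For (b), I would first show $Z_R(M)\subseteq W_R(M)$ whenever $M$ is Hopfian: if $r\in Z_R(M)$ satisfied $rM = M$, the surjective endomorphism ``multiplication by $r$'' would be an isomorphism by the Hopfian hypothesis, contradicting the existence of a nonzero kernel. So any finitely generated $I\subseteq Z_R(M)$ sits in $W_R(M)$, and the dual of Property~$\mathcal A$ delivers $IM\neq M$. To upgrade this to $(0:_M I)\neq 0$, I reduce to the faithful case by passing to $\bar R$; $M$ stays comultiplication because $(0:_M(J+Ann_R(M))) = (0:_M J)$ for every ideal $J$ of $R$. Using the comultiplication property, write $IM = (0:_M J)$ for some ideal $J$ of $\bar R$. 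Then $(JI)M = 0$, so $JI = 0$ by faithfulness. Assuming $(0:_M I)=0$, the identity $I(jm) = (Ij)m = 0$ for $j\in J$, $m\in M$ forces $jM = 0$ and hence $j = 0$; thus $J = 0$ and $IM = (0:_M 0) = M$, a contradiction. Part (c) is exactly dual: $W_R(M)\subseteq Z_R(M)$ for co-Hopfian $M$, Property~$\mathcal A$ gives $(0:_M I)\neq 0$, the multiplication hypothesis writes $(0:_M I) = JM$ with $IJ = 0$ in the faithful case, and $IM = M$ would force $JM = 0$, contradicting $(0:_M I)\neq 0$.

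For (d), the $(b)\Rightarrow(a)$ half of Theorem~\ref{tt1.5} is the model: choose $a$ with $\langle a_1,\ldots,a_n\rangle = (a)$ using the principal-ideal hypothesis, so $a_iM\subseteq aM$ for every $i$, and (once $aM\neq M$ is known) Remark~\ref{r2.1} supplies a proper completely irreducible $L$ with $aM\subseteq L$, which is the required submodule. The main obstacle in the whole proof, I expect, is the faithful reduction in (b) and (c): one must verify that $IM$, $(0:_M I)$, the (co)multiplication property, and the (dual of) Property~$\mathcal A$ hypothesis are all preserved under replacing $R$ by $R/Ann_R(M)$. Once that bookkeeping is in place, the annihilator cascade driven by faithfulness is short and essentially the same in both parts.
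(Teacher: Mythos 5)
Your treatments of (a), (b) and (c) are correct, and in fact more self-contained than the paper's: the paper disposes of these parts by asserting $Z_R(M)=W_R(M)$ for Hopfian comultiplication (resp.\ co-Hopfian multiplication) modules and then citing \cite[Lemma 3.11]{AF08}, \cite[Proposition 3.1]{MR2475337} and \cite[Note 1.13]{MR2096268}. Your computations with $I=Ann_R(Ann_R(I))$ in (a), and with $IM=(0:_MJ)$ resp.\ $(0:_MI)=JM$ in (b) and (c), reprove exactly the content of those citations, and you correctly observe that only the single inclusion $Z_R(M)\subseteq W_R(M)$ (resp.\ $W_R(M)\subseteq Z_R(M)$) coming from the Hopfian (resp.\ co-Hopfian) hypothesis is needed. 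The reduction to the faithful case you worry about is actually dispensable: in (b), from $JIM=0$ and $(0:_MI)=0$ one gets $J\subseteq Ann_R(M)$ directly, whence $IM=(0:_MJ)=M$, and dually in (c), so no bookkeeping over $R/Ann_R(M)$ is required.

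The genuine gap is in (d), and you flagged it yourself with the parenthesis ``once $aM\neq M$ is known'': that step cannot be supplied. Writing $\langle a_1,\dots,a_n\rangle=(a)$, the generator $a$ is an $R$-linear combination of the $a_i$, and $a_i\in W_R(M)$ for all $i$ does not force $a\in W_R(M)$ unless $W_R(M)$ is closed under addition, i.e.\ unless $M$ is secondal --- which is exactly the extra hypothesis present in the $(b)\Rightarrow(a)$ direction of Theorem \ref{tt1.5} that you are imitating. Indeed the statement of (d) is false as written: for $R=M=\Bbb Z$ (a principal ideal ring) the elements $2,3\in W_{\Bbb Z}(\Bbb Z)$ admit no proper submodule of $\Bbb Z$ containing both $2\Bbb Z$ and $3\Bbb Z$, since $\langle 2,3\rangle=\Bbb Z$; the paper itself records in Example \ref{e121.1} that the $\Bbb Z$-module $\Bbb Z$ does not satisfy the dual of strong Property $\mathcal{A}$. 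The paper's own proof of (d) (``This is clear'') is therefore equally unjustified; the part needs the additional assumption that $M$ is secondal, in which case it reduces to Theorem \ref{tt1.5}.
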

\begin{proof}
(a) This follows from \cite[Lemma 3.11]{AF08}.

(b) First note that every fully coidempotent $R$-module is a Hopfian comultiplication $R$-module by \cite[Theorem 3.9 and Proposition 3.5]{AF122}. As $M$ is a Hopfian comultiplication $R$-module, $Z_R(M)=W_R(M)$. Now the result follows from \cite[Proposition 3.1]{MR2475337}.

(c)  First note that every fully idempotent $R$-module is a co-Hopfian multiplication $R$-module by \cite[Proposition 2.7]{AF122}. Since $M$ is a co-Hopfian multiplication $R$-module, $Z_R(M)=W_R(M)$. Now the result follows from \cite[Note 1.13]{MR2096268}.

(d) This is clear.
\end{proof}

\begin{lem}\label{l1.2}
Let $S$ be a multiplicatively closed subset of $R$, $I$ and ideal of $R$, and $M$ be an $R$-module.
Then we have the following.
\begin{itemize}
  \item [(a)] If $S^{-1}I \subseteq W_{S^{-1}R}(S^{-1}M)$, then $I \subseteq W_R(M)$.
  \item [(b)] If $Z_R(M) \cap S=\emptyset$, $W_R(M) \cap S=\emptyset$, and $I \subseteq W_R(M)$, then $S^{-1}I \subseteq W_{S^{-1}R}(S^{-1}M)$.
  \item [(c)] If $M$ is an Hopfian module (in particular, $M$ is a multiplication or coidempotent module), $W_R(M) \cap S=\emptyset$, and $I \subseteq W_R(M)$, then $S^{-1}I \subseteq W_{S^{-1}R}(S^{-1}M)$.
\end{itemize}
\end{lem}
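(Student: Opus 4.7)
The plan is to exploit two elementary facts that hold in this setting: $(i/1)\cdot S^{-1}M = S^{-1}(iM)$ for every $i\in R$, and under a disjointness assumption $W_R(M)\cap S=\emptyset$ (respectively $Z_R(M)\cap S=\emptyset$) each $s\in S$ acts surjectively (respectively injectively) on $M$. Part (a) is then essentially formal, part (b) requires a cancellation argument via automorphisms, and part (c) reduces to part (b).

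For part (a), I take an arbitrary $i\in I$ and observe that $i/1\in S^{-1}I\subseteq W_{S^{-1}R}(S^{-1}M)$ forces $S^{-1}(iM)=(i/1)\cdot S^{-1}M\neq S^{-1}M$; this rules out $iM=M$, so $i\in W_R(M)$. For part (b), I fix $i/s\in S^{-1}I$ and argue by contradiction, assuming $(i/s)\cdot S^{-1}M=S^{-1}M$. For each $m\in M$, the membership $m/1\in S^{-1}(iM)$ unwinds to an equation $vm=im'$ in $M$ for some $m'\in M$ and $v\in S$. The hypotheses $W_R(M)\cap S=\emptyset$ and $Z_R(M)\cap S=\emptyset$ make multiplication by $v$ an $R$-module automorphism of $M$; since $R$ is commutative, its inverse commutes with multiplication by $i$, so applying the inverse to $vm\in iM$ gives $m\in iM$. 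As $m\in M$ was arbitrary, $iM=M$, contradicting $i\in I\subseteq W_R(M)$.

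For part (c), I reduce to part (b) by checking that the Hopfian hypothesis together with $W_R(M)\cap S=\emptyset$ already forces $Z_R(M)\cap S=\emptyset$: any $s\in S$ gives $sM=M$, so multiplication by $s$ is a surjective endomorphism of $M$, hence an isomorphism by Hopfianness, hence injective, whence $s\notin Z_R(M)$. The parenthetical cases (multiplication or coidempotent modules) then reduce to the Hopfian case by invoking the fact that such modules are Hopfian, using the references already collected in the preceding discussion.

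The main obstacle is the cancellation step in part (b): turning the localized identity into a pointwise containment $m\in iM$ demands both that the relevant $v\in S$ act invertibly on $M$ and that this inverse commute with multiplication by $i$. Both points are handled by commutativity of $R$ and $R$-linearity of the inverse automorphism, but they are easy to overlook and are precisely what makes the two disjointness hypotheses (or Hopfianness, in part (c)) indispensable.
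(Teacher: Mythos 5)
Your proposal is correct and follows essentially the same route as the paper: part (a) is the formal observation that $(i/1)S^{-1}M=S^{-1}(iM)$, part (b) is the same contradiction argument that unwinds the localized equality to $vm=im'$ with $v\in S$ and then cancels $v$ using surjectivity (from $W_R(M)\cap S=\emptyset$) and injectivity (from $Z_R(M)\cap S=\emptyset$), and part (c) reduces to (b) by noting that Hopfianness turns the surjective action of each $s\in S$ into an injective one, which is exactly the paper's observation that $Z_R(M)\subseteq W_R(M)$ for Hopfian modules. Your phrasing via the inverse automorphism is a clean repackaging of the paper's explicit computation $st(m-am_2)=0$, but it is not a genuinely different argument.
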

\begin{proof}
(a) This is clear.

(b) Suppose that $S^{-1}I \not\subseteq W_{S^{-1}R}(S^{-1}M)$ and seek for a contradiction. Then $S^{-1}(aM)=S^{-1}M$ for some $a \in I$. As  $I \subseteq W_R(M)$, there exists $m \in M \setminus IM$. Now we have $stm=sam_1$ for some $s, t \in S$ and $m_1 \in M$. Since  $W_R(M) \cap S=\emptyset$, $tM=M$ and so $m_1=tm_2$ for some $m_2 \in M$. Hence, $st(m-am_2)=0$. Now  $Z_R(M) \cap S=\emptyset$ implies that $m=am_2$, which is a contradiction.

(c) This follows from the fact that $Z_R(M) \subseteq W_R(M)$ and part (b).
\end{proof}

\begin{cor}\label{c1.5}
Let $S$ be a multiplicatively closed subset of $R$ and $M$ be an $R$-module. Consider the conditions:
\begin{itemize}
\item [(a)] $M_S$ satisfies  the dual of   Property $\mathcal{A}$  as an $R$-module;
\item [(b)] $M_S$ satisfies  the dual of   Property $\mathcal{A}$  as an $R_S$-module;
\item [(c)] $M$ satisfies the dual of   Property $\mathcal{A}$  as an $R$-module.
\end{itemize}
Then $(a)\Leftrightarrow (b)$. If further $S \cap W_R(M) = \emptyset$ and $S \cap Z_R(M) = \emptyset$, (a), (b) and (c) are equivalent.
\end{cor}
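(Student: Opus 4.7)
The equivalence $(a)\Leftrightarrow(b)$ I would obtain immediately as a special case of Theorem~\ref{t1.3} applied to the canonical localization homomorphism $f\colon R\to R_S$, $r\mapsto r/1$. Indeed, for this $f$ one has $f(I)R_S=S^{-1}I$ for every (finitely generated) ideal $I$ of $R$, which is the hypothesis of Theorem~\ref{t1.3}(a); and every (finitely generated) ideal $J$ of $R_S$ has the form $J=S^{-1}I$ for some (finitely generated) ideal $I$ of $R$, which is the hypothesis of Theorem~\ref{t1.3}(b). Since the $R$-module structure on $M_S$ appearing in (a) is the one induced via $f$, the two parts of Theorem~\ref{t1.3} deliver both implications of $(a)\Leftrightarrow(b)$.

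Now assume $S\cap W_R(M)=\emptyset$ and $S\cap Z_R(M)=\emptyset$; I would establish $(b)\Leftrightarrow(c)$. For $(b)\Rightarrow(c)$, take a finitely generated ideal $I$ of $R$ with $I\subseteq W_R(M)$. Lemma~\ref{l1.2}(b) yields $S^{-1}I\subseteq W_{R_S}(M_S)$, and $S^{-1}I$ is a finitely generated ideal of $R_S$, so (b) gives $(S^{-1}I)M_S\neq M_S$. Since $(S^{-1}I)M_S=S^{-1}(IM)$, exactness of localization forces $IM\neq M$.

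For $(c)\Rightarrow(b)$, let $J$ be a finitely generated ideal of $R_S$ with $J\subseteq W_{R_S}(M_S)$. Write $J=S^{-1}I$ with $I=\langle a_1,\dots,a_n\rangle$. Lemma~\ref{l1.2}(a) gives $I\subseteq W_R(M)$, and (c) yields some $m\in M\setminus IM$. The plan is to show $m/1\notin JM_S$. Suppose otherwise: unwinding the definition of $JM_S=S^{-1}(IM)$ produces $u\in S$ with $um\in IM$, say $um=\sum_k a_k n_k$ for some $n_k\in M$. Here both extra hypotheses come in: from $u\notin W_R(M)$ I get $uM=M$, so each $n_k$ can be written as $un_k'$; substituting gives $u(m-\sum_k a_k n_k')=0$; then $u\notin Z_R(M)$ lets me cancel $u$ to obtain $m=\sum_k a_k n_k'\in IM$, a contradiction.

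The main obstacle is exactly this last step: the nontrivial content of the corollary beyond Theorem~\ref{t1.3} is passing from $IM\neq M$ to $(S^{-1}I)M_S\neq M_S$, which does not hold in general and requires both hypotheses on $S$, one to make every $s\in S$ act surjectively on $M$ and the other to make it injective. Everything else is bookkeeping with Theorem~\ref{t1.3} and Lemma~\ref{l1.2}.
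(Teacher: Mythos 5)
Your proposal is correct and follows the same route as the paper: $(a)\Leftrightarrow(b)$ via Theorem~\ref{t1.3} applied to $f\colon R\to R_S$, and $(b)\Leftrightarrow(c)$ via Lemma~\ref{l1.2} under the two hypotheses on $S$. The only difference is one of detail: the paper compresses $(b)\Leftrightarrow(c)$ into a bare citation of Lemma~\ref{l1.2}(b), whereas you correctly identify and carry out the step that citation does not literally cover, namely passing from $IM\neq M$ to $(S^{-1}I)M_S\neq M_S$ by the cancellation argument using $S\cap W_R(M)=\emptyset$ and $S\cap Z_R(M)=\emptyset$.
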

\begin{proof}
The equivalence of (a) and (b) follows from Theorem \ref{t1.3}. Now assume that $S \cap W_R(M) = \emptyset$ and $S \cap Z_R(M) = \emptyset$. Then $(b) \Leftrightarrow (c)$ from Lemma \ref{l1.2} (b).
\end{proof}

\begin{prop}\label{p24.۶}
Let $X$ be an indeterminate over $R$,  $M$ be an $R$-module, and $M[X]$ satisfies the dual of strong Property $\mathcal{A}$ over $R[X]$. Then $M$ satisfies the dual of Property $\mathcal{A}$.
\end{prop}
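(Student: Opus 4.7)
The plan is to reduce the claim to an application of the dual of strong Property $\mathcal{A}$ of $M[X]$ over $R[X]$, by lifting a finitely generated ideal of $R$ contained in $W_R(M)$ to a finite list of elements of $W_{R[X]}(M[X])$. Given a finitely generated ideal $I=\langle a_1,\ldots,a_n\rangle$ of $R$ with $I\subseteq W_R(M)$, my goal is to show $IM\neq M$.

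The first step, and the only step requiring any thought, is to verify the inclusion $W_R(M)\subseteq W_{R[X]}(M[X])$. For $a\in W_R(M)$, pick $m\in M\setminus aM$ and view $m$ as the constant polynomial in $M[X]$. If $m\in aM[X]$, then $m=\sum_{k\ge 0}(am_k)X^k$ for some $m_k\in M$; comparing coefficients forces $m=am_0\in aM$, a contradiction. Hence $aM[X]\neq M[X]$, i.e., $a\in W_{R[X]}(M[X])$. In particular $a_1,\ldots,a_n\in W_{R[X]}(M[X])$.

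Now I invoke the hypothesis: since $M[X]$ satisfies the dual of strong Property $\mathcal{A}$ over $R[X]$, there exists a completely irreducible submodule $L$ of $M[X]$ with $a_iM[X]\subseteq L\neq M[X]$ for $i=1,\ldots,n$. Summing over $i$ gives $\sum_{i=1}^n a_iM[X]\subseteq L$, and since $I=\sum_{i=1}^n a_iR$, this sum equals $IM[X]$. A routine coefficient argument identifies $IM[X]=(IM)[X]$. Therefore $(IM)[X]\subseteq L\neq M[X]$. If $IM=M$, then $(IM)[X]=M[X]$ and so $L=M[X]$, contradicting $L\neq M[X]$. Consequently $IM\neq M$, which is exactly the dual of Property $\mathcal{A}$ for $M$.

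The only potential obstacle is the lifting step $W_R(M)\subseteq W_{R[X]}(M[X])$, but the constant-coefficient trick handles it cleanly; everything else is bookkeeping with the definitions of $IM[X]$ and completely irreducible submodules.
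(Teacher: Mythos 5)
Your proof is correct and follows essentially the same route as the paper: lift the generators $a_i$ into $W_{R[X]}(M[X])$, apply the dual of strong Property $\mathcal{A}$ to get $I[X]M[X]\neq M[X]$, and descend to $IM\neq M$. You simply supply the coefficient-comparison details that the paper leaves implicit.
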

\begin{proof}
Let $I = \langle a_1,...,a_n\rangle$ be a finitely
generated ideal of $R$ such that $a_i \in W_R(M)$ for $i = 1,..., n$. Then $I[X] = \langle a_1,...,a_n\rangle R[X]$ is
a finitely generated ideal of $R[X]$ such that $a_i\in W_{R[X]}(M[X])$ for $i = 1,...,n$.
Since $M[X]$ satisfies the dual of strong Property $\mathcal{A}$ over $R[X]$, we get
$I[X]M[X] \not=M[X]$. This implies that $IM \not=M$. Thus
$M$ satisfies the dual of Property $\mathcal{A}$.
\end{proof}

Recall that a ring $R$ is called \textit{$B \acute{e} zout$} if every finitely generated ideal $I$ of $R$ is principal.

A submodule $N$ of an $R$-module $M$ is \textit{small} if for any submodule $X$ of $M$, $X + N = M$ implies that $X = M$.

A prime ideal $P$ of $R$ is said to be a \textit{coassociated prime ideal} of an $R$-module $M$ if there exists a cocyclic homomorphic image $T$ of $M$ such that $Ann_R(T) = P$.
The set of coassociated prime ideals of $M$ is denoted by $Coass(M)$ \cite{MR1481103}.

\begin{thm}\label{t1.6}
\begin{itemize}
\item [(a)] The trivial $R$-module vacuously satisfies  the dual of
 Property $\mathcal{A}$.
\item [(b)] Every module over a $B \acute{e} zout$ ring satisfies  the dual of  Property $\mathcal{A}$.
\item [(c)]  Let $R$ be a zero-dimensional commutative ring (e.g., $R$ is Artinian). Then every
$R$-module satisfies  the dual of  Property $\mathcal{A}$.
\item [(d)] Let $M$ be a finitely generated $R$-module. Then $M$ satisfies
 the dual of  Property $\mathcal{A}$.
\item [(e)] Let $M$ be an Artinian $R$-module. Then $M$ satisfies
 the dual of  Property $\mathcal{A}$. In fact, for any ideal $I$ of $R$ with $I \subseteq W_R(M)$, $IM \not = M$.
\item [(f)] Let $M$ and $\acute{M}$ be $R$-modules with $W_R(M) \subseteq W_R(\acute{M})$. If
$\acute{M}$ satisfies  the dual of  Property $\mathcal{A}$  (respectively, the dual of strong Property $A$), then $M \oplus \acute{M}$ satisfies
 the dual of  Property $\mathcal{A}$  (respectively, the dual of strong  Property $\mathcal{A}$).
\item [(g)] Let $N$ be a small submodule of $M$. Then $M$ satisfies
 the dual of Property $\mathcal{A}$ (resp. $M$ is a secondal module) if and only if $M/N$ satisfies
 the dual of  Property $\mathcal{A}$ (resp.  $M/N$ is a secondal module).
\end{itemize}
\end{thm}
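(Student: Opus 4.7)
The plan is to prove parts (a)--(g) individually, since they cover different situations, though several share the underlying idea that $W_R(M)$ admits a tractable description. For the routine parts I would argue directly from the definitions. Part (a) is vacuous because $rM = 0 = M$ for every $r$ when $M = 0$, making $W_R(M) = \emptyset$. Part (b) is immediate: over a B\'ezout ring every finitely generated ideal is principal, $I = (a)$, so $I \subseteq W_R(M)$ forces $IM = aM \neq M$. Part (d) is Nakayama's determinant trick: if $M$ is finitely generated and $IM = M$, one obtains $s = 1 + i \in 1 + I$ with $sM = 0$; then $m = -im$ for every $m \in M$, forcing $iM = M$ and contradicting $i \in I \subseteq W_R(M)$. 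Part (f) uses the identity $W_R(M \oplus \acute{M}) = W_R(M) \cup W_R(\acute{M})$, which collapses to $W_R(\acute{M})$ under the hypothesis; the dual strong version lifts a completely irreducible $\acute{L} \subsetneq \acute{M}$ with $a_i\acute{M} \subseteq \acute{L}$ to $L := M \oplus \acute{L}$, which is still completely irreducible in $M \oplus \acute{M}$ because $(M \oplus \acute{M})/L \cong \acute{M}/\acute{L}$ stays cocyclic. Part (g) reduces to the identity $W_R(M/N) = W_R(M)$, which holds because smallness of $N$ makes $rM + N = M$ imply $rM = M$; the same smallness gives $I(M/N) = M/N$ if and only if $IM = M$, transferring both the dual of Property $\mathcal{A}$ and secondality.

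The two substantive parts are (c) and (e). For (c), I would exploit Kaplansky's criterion for zero-dimensional rings: each $a \in R$ satisfies $a^n = a^{n+1}b$ for some $n,b$, so $(ab)^n$ is an idempotent in $aR$ with $a^n(1-(ab)^n) = 0$. Given $I = \langle a_1, \ldots, a_k\rangle \subseteq W_R(M)$ and corresponding idempotents $e_i \in a_iR \subseteq I$, I would set $e = 1 - \prod_{i=1}^k(1 - e_i)$, which is idempotent and lies in $I$ by inclusion-exclusion. The splitting $M = eM \oplus (1-e)M$ is $R$-stable, each $a_i$ is nilpotent on $(1-e)M$, so $I^N(1-e)M = 0$ for some large $N$; hence $IM = M$ forces $(1-e)M = 0$, giving $eM = M$, which contradicts $e \in I \subseteq W_R(M)$.

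For (e), I would first establish $W_R(M) = \bigcup_{P \in Coass(M)} P$: if $r \in W_R(M)$ then $M/rM$ is a nonzero Artinian quotient that has a simple, hence cocyclic, quotient whose annihilator is a maximal ideal in $Coass(M)$ containing $r$; conversely, $P = Ann_R(T)$ with $T = M/K$ cocyclic and nonzero gives $rM \subseteq K \subsetneq M$. Since $Coass(M)$ is finite for Artinian $M$, prime avoidance applied to any ideal $I \subseteq W_R(M)$ produces a single $P \in Coass(M)$ with $I \subseteq P$, and then $IM \subseteq PM \subseteq K \subsetneq M$. Because prime avoidance applies to arbitrary ideals, this also yields the ``in fact'' strengthening beyond the finitely generated case. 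The principal obstacles are in (c) and (e): (c) requires careful bookkeeping to confirm that the crafted idempotent truly lies in $I$ and that nilpotence of the generators forces $I^N(1-e)M = 0$, while (e) rests on the less elementary finiteness of $Coass(M)$ for Artinian modules.
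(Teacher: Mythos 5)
Your parts (a), (b), (d), (f) and (g) are correct and track the paper's arguments ((f) even supplies the completely irreducible lift $L=M\oplus\acute{L}$, with $(M\oplus\acute{M})/L\cong\acute{M}/\acute{L}$ cocyclic, that the paper leaves implicit). The one genuine gap is in part (e), in your derivation of $W_R(M)=\bigcup_{P\in Coass(M)}P$: you assert that the nonzero Artinian module $M/rM$ ``has a simple, hence cocyclic, quotient.'' That is false in general: $\Bbb Z(p^{\infty})$ is a nonzero Artinian $\Bbb Z$-module with no maximal proper submodule, hence no simple quotient, and it arises as $M/rM$ for $M=\Bbb Z(p^{\infty})$ and $r=0\in W_{\Bbb Z}(M)$. (Every nonzero module does admit a cocyclic quotient by a Zorn argument, but the annihilator of a cocyclic module need not be prime --- e.g. $\Bbb Z/4\Bbb Z$ --- so that weaker fact does not land you in $Coass(M)$ either.) The identity you want is true, but its proof goes through secondary representation theory: an Artinian module is representable, $M=S_1+\dots+S_n$ with $S_i$ being $P_i$-secondary, $rM\neq M$ forces $r\in P_i$ for some $i$, and attached primes are coassociated. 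The paper simply cites Yassemi for exactly this; you need to do the same or reproduce that argument. The remainder of your (e) --- finiteness of $Coass(M)$, prime avoidance for an arbitrary ideal, and $IM\subseteq PM\subseteq K\subsetneq M$ --- is fine and matches the paper.

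Part (c) is where you genuinely diverge, and your argument is correct. The paper chooses a prime $P$ with $I\subseteq P\subseteq W_R(M)$, uses that $P$ has height $0$ to make $I_P$ nilpotent, pulls back to $I^ns=0$ for some $s\notin P$, and then takes the last $t$ with $I^tsM\neq 0$ to conclude $IM\subseteq(0:_MI^ts)\neq M$. You instead invoke the $\pi$-regularity characterization of zero-dimensional rings ($a^n=a^{n+1}b$), manufacture idempotents $e_i=(a_ib_i)^{n_i}\in a_iR$ with $a_i^{n_i}(1-e_i)=0$, and combine them into a single idempotent $e=1-\prod_i(1-e_i)\in I$ for which $I$ acts nilpotently on $(1-e)M$; then $IM=M$ would give $M=I^NM\subseteq eM$, contradicting $e\in I\subseteq W_R(M)$. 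Both routes are valid: yours avoids localization entirely (and sidesteps the paper's lightly justified step that $s\notin Ann_R(M)$), at the cost of invoking the nontrivial $\pi$-regularity criterion, while the paper's is shorter once the localization facts are granted.
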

\begin{proof}
(a) Note that $W_R(0) = \emptyset$.

(b) This is clear.

(c) Suppose $dim R = 0$ and $M$ is an $R$-module. We can assume $M\not = 0$. Let $I$ be
a finitely generated ideal of $R$ with $I \subseteq W_R(M)$. So, $I \subseteq P \subseteq W_R(M)$ for some prime
ideal $P$ of $R$ by using \cite[Theorem 2.15]{MR1481103}. Since $ht P = 0$ and $I$ is finitely generated, $I^n_P = 0$ for some $n\geq 1$.
Hence there is an $s \in R\setminus P$ with $I^ns = 0$. Since $s \in R \setminus P$, $s\not \in Ann_R(M)$. Thus $sM \not =0$. We have $I^nsM = 0$. Suppose $I^tsM\not= 0$, but $I^{t+1}sM = 0$. Then $IM \subseteq (0:_MI^ts) \not =M$. Therefore,  $IM \not =M$.

(d) Let $M$ be a finitely generated $R$-module and $I$ be an ideal of $R$ with $I \subseteq W_R(M)$.  Assume contrary that $IM=M$. Then $(1+a)M=0$ by \cite[Theorem 76]{MR0345945}. As $a \in I\subseteq W_R(M)$, there exists an $m \in M \setminus aM$. Now, $(1+a)m=0$  implies that $m \in aM$, which is a contradiction.

(e) As $M$ is an Artinian $R$-module, $W_R(M)=\cup^n_{i=1} P_i$ by using
\cite[Theorem 2.10 (c), Theorem 2.15, Corollary 3.2]{MR1481103}, where $P_i \in Coass(M)$.
Now let $I \subseteq W_R(M)$ be an ideal. Then $I \subseteq P_i$ for some  $P_i \in Coass(M)$. Hence for some completely irreducible submodule $L$ of $M$ with $L \not =M$, we have $I \subseteq P_i=(L:_RM)$. This implies that $IM \not=M$.

(f) Let $\acute{M}$ satisfies the dual of  Property $\mathcal{A}$. It is easy to see that $W_R(M \oplus \acute{M}) = W_R(M) \cup W_R(\acute{M}) = W_R(\acute{M})$. Let $I$ be a finitely generated
ideal of $R$ with $I \subseteq  W_R(M) \cup W_R(\acute{M}) = W_R(\acute{M})$. Then
$I\acute{M}\not =\acute{M}$. Thus there exists an $x \in I\acute{M}\setminus \acute{M}$. This implies that $(0,x) \not \in I(M \oplus \acute{M})$ and so  $I(M \oplus \acute{M})\not =M \oplus \acute{M}$, as needed.

(g) We always have $W_R(M/N)\subseteq W_R(M)$. As $N$ is small we get that $W_R(M)\subseteq W_R(M/N)$. Now the result is straightforward.
\end{proof}

Let $M$ be an $R$-module. The idealization $R(+)M =\{(a,m): a \in R, m \in  M\}$ of $M$ is
a commutative ring whose addition is component-wise and whose multiplication is defined as $(a,m)(b,\acute{m}) =
(ab, a\acute{m} + bm)$ for each $a, b \in R$, $m, \acute{m}\in M$ \cite{Na62}.
\begin{prop}\label{l1.7}
Let $M$ be an $R$-module. Then we have
$$
W_{R(+)M}(R(+)M)=W_R(R) (+)M.
$$

\end{prop}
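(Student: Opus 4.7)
The plan is to reduce the claim to a characterization of units in the idealization. For any commutative ring $S$ with identity and any $s \in S$, the condition $sS = S$ is equivalent to $s$ being a unit of $S$. Applying this with $S = R(+)M$, we see that $(a,m) \in W_{R(+)M}(R(+)M)$ if and only if $(a,m)$ is \emph{not} a unit of $R(+)M$; similarly $a \in W_R(R)$ if and only if $a$ is not a unit of $R$. So it suffices to prove that $(a,m)$ is a unit of $R(+)M$ if and only if $a$ is a unit of $R$.

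For the forward direction, suppose $(a,m)(b,m') = (1,0)$ for some $(b,m') \in R(+)M$. By the multiplication rule of the idealization, the first coordinate gives $ab = 1$, so $a$ is a unit of $R$. Conversely, suppose $a$ is a unit of $R$ with inverse $a^{-1}$. Set $b = a^{-1}$ and $m' = -a^{-2}m$; then $(a,m)(a^{-1},-a^{-2}m) = (aa^{-1},\, a(-a^{-2}m) + a^{-1}m) = (1,0)$, so $(a,m)$ is a unit of $R(+)M$.

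Combining these two facts, $(a,m) \in W_{R(+)M}(R(+)M)$ precisely when $a \in W_R(R)$, with no restriction on $m \in M$. This yields the set equality $W_{R(+)M}(R(+)M) = W_R(R)(+)M$.

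There is no real obstacle; the argument is a direct computation once one notices that $sS=S$ characterizes units in a ring with identity. The only thing to be careful about is to use the idealization multiplication $(a,m)(b,m') = (ab, am' + bm)$ correctly when producing the inverse.
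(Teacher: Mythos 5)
Your proof is correct, and it takes a cleaner route than the paper's. You reduce everything to the observation that in a commutative ring with identity, $sS=S$ exactly when $s$ is a unit, and then characterize the units of the idealization: $(a,m)$ is invertible in $R(+)M$ if and only if $a$ is invertible in $R$, with explicit inverse $(a^{-1},-a^{-2}m)$. The paper instead argues directly from the definition of $W$: for the inclusion $\subseteq$ it picks $(b,y)\notin (a,x)(R(+)M)$ and splits into cases according to which coordinate of $(a,x)(c,z)$ fails to hit $(b,y)$ (using along the way that $W_R(M)\subseteq W_R(R)$), and for $\supseteq$ it picks $r\in R\setminus aR$ and checks $(r,0)\notin (a,x)(R(+)M)$. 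The two arguments have the same computational core, but your version buys something concrete: the paper's case split ``$b\neq ac$ or $y\neq az+cx$'' is quantified over $(c,z)$, and passing from the pointwise disjunction to ``$a\in W_R(R)$ or $a\in W_R(M)$'' requires more care than the paper gives it; your unit-based formulation sidesteps that entirely and also makes it transparent that the second coordinate $m$ imposes no constraint. One could only remark that your argument leans on $R(+)M$ having an identity, but that is built into the definition of the idealization, so there is no gap.
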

\begin{proof}
First note that $W_R(M) \subseteq W_R(R)$.
Let $(a, x ) \in W_{R(+)M}(R(+)M)$. Then there exists $(b, y) \in R(+)M \setminus (a, x )(R(+)M)$. This implies that
for each $(c, z) \in R(+)M$, $(b, y) \not =(a,x)(c,z)$. Hence, $b \not =ac$ or $y \not =az+cx$. If  $b \not =ac$, then $a \in W_R(R)$ and we are done. If $y \not =az+cx$. Then by setting $c=0$, we have  $y \not =az$. Thus $a \in W_R(M) \subseteq W_R(R)$ and so
$W_{R(+)M}(R(+)M)\subseteq W_R(R)(+)M$. Now let $(a, x ) \in W_R(R)(+)M$. Then $a \in W_R(R)$. Thus there exist $r \in R$ such that $r \in R \setminus aR$.
Assume contrary that $(a,x)(R(+)M)=R(+)M$. Then $(r,0)=(a,x)(c,y)$ for some $(c,y) \in R(+)M$.  Thus $r=ac$,  which is a contradiction. Hence $(a,x)(R(+)M)\not =R(+)M$, as needed.
\end{proof}

\begin{ex}\label{e1.7}
Let $M = \oplus R/I$, where the sum runs over all proper finitely generated ideals of $R$. Then
 for a proper finitely generated ideal $I$ of $R$, $I \subseteq W_R(M)$ and $I(R/I)=\bar{0}\not =R/I$ implies that $M$
satisfies the dual of  Property $\mathcal{A}$. As $R$ is a submodule of $M$, we have  $R$ is a submodule of
an $R$-module satisfying the dual of  Property $\mathcal{A}$. Let $\acute{M}$ be any $R$-module. Then $M\oplus \acute{M}$ again,
satisfies the dual of  Property $\mathcal{A}$. Thus, any $R$-module is a submodule, homomorphic image, or
direct factor of a module satisfying the dual of  Property $\mathcal{A}$.
\end{ex}
\section{The duals of proper strong Property $\mathcal{A}$  for modules}

\begin{lem}\label{l1.8}
Let $M$ be an $R$-module and $S=R \setminus W_R(M)$. Then $S^{-1}R=R$ if and only if  $R=U(R) \cup W_R(M)$, where $U(R)$ is the set of all invertible elements of $R$.
\end{lem}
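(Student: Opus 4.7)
The plan is to unpack the equality $S^{-1}R = R$ as meaning the canonical localization map $\varphi : R \to S^{-1}R$, $r \mapsto r/1$, is an isomorphism, and then show that this happens precisely when every element of $S$ is already a unit of $R$.

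For the forward direction, I would assume $S^{-1}R = R$. Pick any $s \in S = R \setminus W_R(M)$. In $S^{-1}R$ the element $s/1$ is invertible by construction, and since $S^{-1}R = R$ (via $\varphi$), this forces $s$ to be invertible in $R$, i.e.\ $s \in U(R)$. Hence $R \setminus W_R(M) \subseteq U(R)$, which rearranges to $R = U(R) \cup W_R(M)$.

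For the reverse direction, I would assume $R = U(R) \cup W_R(M)$ and take any $s \in S$. Since $s \notin W_R(M)$, the hypothesis forces $s \in U(R)$, so every element of $S$ is already a unit of $R$. By the universal property of localization (or by the direct computation $r/s = rs^{-1}/1$ together with the observation that $r/1 = r'/1$ in $S^{-1}R$ iff $ts(r - r') = 0$ for some $t \in S$, which, since $t$ is a unit, gives $r = r'$), the canonical map $\varphi : R \to S^{-1}R$ is a ring isomorphism, so $S^{-1}R = R$.

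The only mildly delicate step is the interpretation of the equality $S^{-1}R = R$, which I read as ``$\varphi$ is an isomorphism''; once that reading is fixed both implications reduce to the standard fact that inverting units does nothing. No deep obstacle is anticipated.
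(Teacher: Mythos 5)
Your proof is correct and follows essentially the same route as the paper: the forward direction extracts the inverse of $s$ from $S^{-1}R=R$, and the reverse direction is the standard fact that localizing at a set of units changes nothing (which the paper simply declares ``clear''). Your extra care in interpreting the equality $S^{-1}R=R$ via the canonical map and checking its injectivity is a harmless refinement, not a different argument.
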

\begin{proof}
 Assume that $S^{-1}R=R$ and $s\in S$. Then $1/s \in S^{-1}R=R$  implies that $s$ is
invertible in $R$. Hence any element of  $S$ is invertible in $R$. The
converse is clear.
\end{proof}

\begin{defn}\label{d1.9}
We say that an $R$-module $M$ satisfies \textit{the dual of proper strong  Property $\mathcal{A}$} if for any proper finitely generated ideal $I = \langle a_1, a_2, . . . , a_n\rangle $
of $R$ such that $a_i \in W_R(M)$ we have $IM \not=M$.
\end{defn}

\begin{thm}\label{t1.10}
Let $M$ be an $R$-module. Then the following assertions are
equivalent:
\begin{itemize}
\item [(a)] $M$ satisfies the dual of proper strong  Property $\mathcal{A}$;
\item [(b)] $M$ satisfies the dual of  Property $\mathcal{A}$  and $\mathfrak{m} \cap W_R(M)$ is an ideal of $R$ for each maximal ideal $\mathfrak{m}$ of $R$.
\end{itemize}
\end{thm}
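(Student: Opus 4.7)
The plan is to prove both implications directly from the definitions, using the elementary fact that $W_R(M)$ is always closed under multiplication by elements of $R$ (since $raM\subseteq aM\neq M$ forces $raM\neq M$) and that $1\notin W_R(M)$ (since $1\cdot M=M$), so every finitely generated ideal contained in $W_R(M)$ is automatically proper.

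For $(b)\Rightarrow(a)$, I would start with a proper finitely generated ideal $I=\langle a_1,\dots,a_n\rangle$ whose generators lie in $W_R(M)$. Since $I$ is proper, it is contained in some maximal ideal $\mathfrak{m}$, so each $a_i$ lies in $\mathfrak{m}\cap W_R(M)$. Using the hypothesis that $\mathfrak{m}\cap W_R(M)$ is an ideal, the whole ideal $I$ generated by the $a_i$'s sits inside $\mathfrak{m}\cap W_R(M)\subseteq W_R(M)$. Then the dual of Property $\mathcal{A}$ gives $IM\neq M$, as required.

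For $(a)\Rightarrow(b)$, I would establish the two parts of (b) separately. First, to get the dual of Property $\mathcal{A}$, let $I$ be a finitely generated ideal with $I\subseteq W_R(M)$; since $1\notin W_R(M)$, $I$ is proper, and its generators lie in $W_R(M)$, so (a) yields $IM\neq M$. Second, fix a maximal ideal $\mathfrak{m}$ and pick $a,b\in \mathfrak{m}\cap W_R(M)$ together with $r\in R$. Closure under scalar multiplication is free: $ra\in\mathfrak{m}$ and $raM\subseteq aM\neq M$. For closure under addition, form the finitely generated ideal $J=\langle a,b\rangle$, which lies inside $\mathfrak{m}$ and is therefore proper, with generators in $W_R(M)$. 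Applying (a) to $J$ gives $JM\neq M$, and since $(a+b)M\subseteq JM$, we conclude $a+b\in W_R(M)$; combined with $a+b\in\mathfrak{m}$ this shows $\mathfrak{m}\cap W_R(M)$ is an ideal.

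There is no real obstacle here: the argument is essentially a bookkeeping exercise reducing everything to the two observations that $W_R(M)$ is absorbing under $R$-multiplication and that finitely generated ideals consisting of generators in $W_R(M)$ are automatically proper. The only point that deserves care is ensuring, in $(a)\Rightarrow(b)$, that one correctly distinguishes between "the generators lie in $W_R(M)$" and "the whole ideal lies in $W_R(M)$"; the definition of the dual of proper strong Property $\mathcal{A}$ only requires the former, which is exactly what is needed to deduce $JM\neq M$ from the two elements $a,b$ whose sum we wish to control.
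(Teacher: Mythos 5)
Your proposal is correct and follows essentially the same route as the paper's proof: both directions reduce to passing between ``the generators lie in $\mathfrak{m}\cap W_R(M)$'' and ``the whole ideal lies in $W_R(M)$'' via the hypothesis that $\mathfrak{m}\cap W_R(M)$ is an ideal, and applying the appropriate property to the two-generated ideal $\langle a,b\rangle$ for the converse. Your explicit remarks that $1\notin W_R(M)$ (so finitely generated ideals inside $W_R(M)$ are automatically proper) and that $IM\neq M$ forces every element of $I$ into $W_R(M)$ merely spell out steps the paper leaves as ``clearly'' and ``it follows that.''
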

\begin{proof}
$(a) \Rightarrow (b)$
 Assume that  $M$ satisfies the dual of proper strong  Property $\mathcal{A}$. Clearly, $M$ satisfies the dual of  Property $\mathcal{A}$. Let $\mathfrak{m}$ be a
maximal ideal of $R$. Let $a, b \in \mathfrak{m} \cap W_R(M)$ and put $I = \langle a, b\rangle$ the ideal generated by $a$ and
$b$. Then $I\subseteq \mathfrak{m}$ and $a, b \in  W_R(M)$. Since  $M$ satisfies the dual of proper strong  Property $\mathcal{A}$, we get that $IM \not=M$.
It follows that $I\subseteq \mathfrak{m} \cap W_R(M)$ and thus $\mathfrak{m} \cap W_R(M)$ is an ideal of $R$.

$(b) \Rightarrow (a)$
Let $I = \langle a_1, a_2, . . . , a_n\rangle $ be a proper finitely generated ideal of $R$ such that $a_i \in W_R(M)$
for $i = 1, ... , n$. Let $\mathfrak{m}$ be a maximal ideal of $R$ with $I \subseteq \mathfrak{m}$. Then $a_1, a_2, ... , a_n\in\mathfrak{m} \cap W_R(M)$. As, by hypotheses, $\mathfrak{m} \cap W_R(M)$ is an ideal of $R$, it follows that $I \subseteq\mathfrak{m} \cap W_R(M)$.
Now, since $M$ satisfies the dual of  Property $\mathcal{A}$, we get  $IM \not=M$. Hence $M$ satisfies the dual of proper strong  Property $\mathcal{A}$.
\end{proof}

\begin{thm}\label{l021.12}
Let $M$ ba an $R$-module. Then
$$
M\\\ satisfies\\\ the\\\ dual\\\ of\\\ strong\\\ Property\\\ \mathcal{A}\\\ \Rightarrow
$$
$$
  \\\ M\\\ satisfies\\\ the\\\ dual\\\ of\\\ proper\\\ strong\\\  Property\\\ \mathcal{A}\\\  \Rightarrow
 $$
 $$
\\\ M\\\ satisfies\\\ the\\\ dual\\\ of\\\ Property\\\ \mathcal{A}.
$$
\end{thm}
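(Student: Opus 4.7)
The plan is to verify each of the two implications directly from the definitions, with no nontrivial machinery required. For the first implication (dual of strong Property $\mathcal{A}$ $\Rightarrow$ dual of proper strong Property $\mathcal{A}$), I would take a proper finitely generated ideal $I = \langle a_1, \ldots, a_n\rangle$ with each $a_i \in W_R(M)$, and feed the elements $a_1, \ldots, a_n$ into Definition \ref{d21.1} to produce a completely irreducible submodule $L$ of $M$ with $a_iM \subseteq L \neq M$ for all $i$. Since $IM = \sum_{i=1}^n a_iM \subseteq L \neq M$, the conclusion $IM \neq M$ is immediate. Note that properness of $I$ plays no role here; the dual of strong Property $\mathcal{A}$ already delivers the stronger conclusion without the properness restriction.

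For the second implication (dual of proper strong Property $\mathcal{A}$ $\Rightarrow$ dual of Property $\mathcal{A}$), I would take any finitely generated ideal $I \subseteq W_R(M)$ and first argue that $I$ must automatically be proper. Indeed, since $1 \cdot M = M$, we have $1 \notin W_R(M)$, hence $1 \notin I$ and $I \neq R$. Writing $I = \langle a_1, \ldots, a_n\rangle$, each generator $a_i$ lies in $I \subseteq W_R(M)$, so the hypothesis from Definition \ref{d1.9} applies and yields $IM \neq M$, which is exactly what the dual of Property $\mathcal{A}$ requires.

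The only point requiring any care is the observation $1 \notin W_R(M)$, which prevents the hypothesis $I \subseteq W_R(M)$ from forcing $I = R$; with this in hand, both implications are genuinely formal, and I do not anticipate any real obstacle in writing them out.
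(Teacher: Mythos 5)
Your proposal is correct and is exactly the definitional unpacking the paper intends, since the paper's own proof is just ``clear from the definitions.'' Both steps check out: $IM=\sum_i a_iM\subseteq L\neq M$ handles the first implication, and the observation that $1\notin W_R(M)$ forces any $I\subseteq W_R(M)$ to be proper handles the second.
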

\begin{proof}
The proof is clear from the definitions.
\end{proof}
The Examples \ref{e1241.1} and \ref{e402.2} show that the converse of Theorem \ref{l021.12} is not true in general.

\begin{ex}\label{e1241.1}
The $\Bbb Z$-module $\Bbb Z$ satisfies the dual of proper strong  Property $\mathcal{A}$ but does not satisfies the dual of strong Property $\mathcal{A}$.
\end{ex}

Let $R_i$ be a commutative ring with identity and  $M_i$ be an $R_i$-module for each $i = 1, 2$. Assume that
$M = M_1\times M_2$ and $R = R_1\times R_2$. Then $M$ is clearly
an $R$-module with component-wise addition and scalar multiplication. Also,
each submodule $N$ of $M$ is of the form $N = N_1\times N_2$, where $N_i$ is a
submodule of $M_i$ for each $i = 1, 2$.
\begin{prop}\label{l1.11}
Let $R_i$ be a commutative ring with identity and $M_i$ be an $R_i$-module for each $i = 1, 2$. Then
$$
W_{R_1\times  R_2}(M_1\times  M_2)=(W_{R_1}(M_1) \times R_2) \cup (R_1 \times W_{R_2}(M_2)).
$$
\end{prop}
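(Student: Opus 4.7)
The plan is to prove the equality by directly computing how the componentwise action decomposes over the product. The key observation is that for any $(r_1,r_2) \in R_1 \times R_2$, the submodule $(r_1,r_2)(M_1 \times M_2)$ coincides with $r_1 M_1 \times r_2 M_2$: indeed, $(r_1,r_2)(m_1,m_2) = (r_1 m_1, r_2 m_2)$, so every element of the image lies in $r_1 M_1 \times r_2 M_2$, and conversely any element $(r_1 m_1, r_2 m_2)$ of $r_1 M_1 \times r_2 M_2$ equals $(r_1,r_2)(m_1,m_2)$.

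From this identification the equality of the theorem becomes a triviality about pairs of sets: $r_1 M_1 \times r_2 M_2 \neq M_1 \times M_2$ if and only if $r_1 M_1 \neq M_1$ or $r_2 M_2 \neq M_2$. Translating back, $(r_1,r_2) \in W_{R_1\times R_2}(M_1 \times M_2)$ iff $r_1 \in W_{R_1}(M_1)$ or $r_2 \in W_{R_2}(M_2)$, which is exactly membership in $(W_{R_1}(M_1) \times R_2) \cup (R_1 \times W_{R_2}(M_2))$.

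Concretely, for $(\subseteq)$ I would take $(r_1,r_2) \in W_{R_1 \times R_2}(M_1 \times M_2)$, choose $(m_1,m_2) \in M_1 \times M_2$ outside $(r_1,r_2)(M_1 \times M_2) = r_1 M_1 \times r_2 M_2$, and note that then $m_1 \notin r_1 M_1$ or $m_2 \notin r_2 M_2$, placing $(r_1,r_2)$ in the appropriate factor of the right-hand union. For $(\supseteq)$, assume without loss of generality that $r_1 \in W_{R_1}(M_1)$, pick $m_1 \in M_1 \setminus r_1 M_1$, and observe that $(m_1, 0) \notin r_1 M_1 \times r_2 M_2 = (r_1,r_2)(M_1 \times M_2)$.

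There is no real obstacle here: the whole content is bookkeeping about the product action, and once the factorization $(r_1,r_2)(M_1 \times M_2) = r_1 M_1 \times r_2 M_2$ is in hand the result is immediate. The only thing worth being careful about is that the inequality of Cartesian products is handled correctly (i.e., using ``$A \times B \neq C \times D$ iff $A \neq C$ or $B \neq D$'' when $A \subseteq C$ and $B \subseteq D$, as is the case here).
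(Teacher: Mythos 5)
Your proof is correct, and it is the natural (essentially the only) argument: the identification $(r_1,r_2)(M_1\times M_2)=r_1M_1\times r_2M_2$ immediately reduces the claim to the observation that a product of subsets equals $M_1\times M_2$ iff each factor is all of $M_i$. The paper dismisses this proposition as ``straightforward'' without giving details, so your write-up simply supplies the bookkeeping the author omits.
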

\begin{proof}
This is straightforward.
\end{proof}

\begin{thm}\label{l2.12}
Let $R_i$ be a commutative ring with identity and $M_i$ be an $R_i$-module for each $i = 1, 2$. Let
$M = M_1\times  M_2$, $R = R_1\times  R_2$, and $S_i=R_i \setminus W_{R_i}(M_i)$. Then
the following assertions are equivalent:
\begin{itemize}
\item [(a)] $M$ satisfies the dual of proper strong  Property $\mathcal{A}$;
\item [(b)] $M_i$ satisfies the dual of proper strong Property $\mathcal{A}$  and $S_i^{-1}R_i=R_i$ for each $i = 1, 2$.
\end{itemize}
\end{thm}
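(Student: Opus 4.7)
The plan is to exploit the product decomposition of ideals of $R = R_1 \times R_2$, namely that every finitely generated ideal of $R$ has the form $J_1 \times J_2$ with $J_i$ a finitely generated ideal of $R_i$, together with the formula for $W_R(M)$ from Proposition \ref{l1.11} and the characterization of $S_i^{-1}R_i = R_i$ from Lemma \ref{l1.8} (equivalently, every non-unit of $R_i$ lies in $W_{R_i}(M_i)$). Throughout I assume $M_1, M_2 \neq 0$, the degenerate case being straightforward.

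For $(a) \Rightarrow (b)$, I would first verify that each $M_i$ satisfies the dual of proper strong Property $\mathcal{A}$. Given a proper finitely generated ideal $I_1 = \langle a_1, \ldots, a_n \rangle$ of $R_1$ with each $a_j \in W_{R_1}(M_1)$, form the ideal $I := \langle (a_1, 0), \ldots, (a_n, 0), (0, 1) \rangle$ of $R$, which equals $I_1 \times R_2$; this is a proper finitely generated ideal of $R$ whose generators lie in $W_R(M)$ by Proposition \ref{l1.11}. Applying (a) yields $IM = I_1 M_1 \times M_2 \neq M$, so $I_1 M_1 \neq M_1$. Next, to verify $S_1^{-1}R_1 = R_1$, suppose toward contradiction that some $a \in R_1$ is neither a unit nor in $W_{R_1}(M_1)$, so $aM_1 = M_1$. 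Then $J := \langle (a, 0), (0, 1) \rangle = aR_1 \times R_2$ is a proper finitely generated ideal (because $a$ is a non-unit) with generators in $W_R(M)$; but $JM = aM_1 \times M_2 = M$, contradicting (a). The argument for $S_2^{-1}R_2 = R_2$ is symmetric.

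For $(b) \Rightarrow (a)$, let $I = \langle (a_1, b_1), \ldots, (a_n, b_n) \rangle$ be a proper finitely generated ideal of $R$ whose generators all lie in $W_R(M)$. A direct computation shows $I = J_1 \times J_2$ with $J_1 = \langle a_1, \ldots, a_n \rangle$ and $J_2 = \langle b_1, \ldots, b_n \rangle$, so $IM = J_1 M_1 \times J_2 M_2$. Since $I$ is proper, $J_1 \neq R_1$ or $J_2 \neq R_2$; by symmetry suppose $J_1 \neq R_1$. Then no $a_j$ is a unit (otherwise $1 \in J_1$), so by the hypothesis $R_1 = U(R_1) \cup W_{R_1}(M_1)$ every $a_j$ lies in $W_{R_1}(M_1)$. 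Applying the dual of proper strong Property $\mathcal{A}$ for $M_1$ to the proper finitely generated ideal $J_1$ gives $J_1 M_1 \neq M_1$, whence $IM \neq M$.

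The main technical point I expect is in the $(a) \Rightarrow (b)$ step, namely producing a proper finitely generated ideal of $R$ whose generators lie in $W_R(M)$ but whose extension to $M$ equals $M$; the trick is to pair the offending element $a \in R_1$ with $(0, 1)$ to obtain the ideal $aR_1 \times R_2$, which is proper precisely because $a$ is a non-unit. The reverse direction is essentially bookkeeping once one observes that properness of $J_1 \times J_2$ prevents its generators from being units, thereby forcing them into $W_{R_i}(M_i)$ under hypothesis (b).
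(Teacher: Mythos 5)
Your proof is correct and takes essentially the same route as the paper's: the same auxiliary ideals $\langle (a_1,0),\ldots,(a_n,0),(0,1)\rangle$ and $\langle (a,0),(0,1)\rangle$ in the direction $(a)\Rightarrow(b)$, and the same decomposition of a proper ideal of $R_1\times R_2$ into $J_1\times J_2$ combined with Lemma \ref{l1.8} in the direction $(b)\Rightarrow(a)$. The only cosmetic differences are that you identify the auxiliary ideal explicitly as the product $I_1\times R_2$ (so that $IM=I_1M_1\times M_2$) and run the $S_1^{-1}R_1=R_1$ step by contradiction, where the paper argues it directly.
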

\begin{proof}
$(a)\Rightarrow (b)$
Assume that $M$ satisfies the dual of proper strong  Property $\mathcal{A}$ and $I_1=\langle a_1,a_2,...,a_n\rangle$ is a finitely generated ideal of $R_1$ such that $I_1 \subseteq W_{R_1}(M_1)$. Set
 $$
 I=\langle (a_1,0), (a_2,0),...,(a_n,0), (0,1)\rangle.
 $$
 Then $(0,1), (a_i,0) \in W_{R_1 \times R_2}(M_1 \times M_2)$ for $i=1,2,...,n$. By part (a), $I(M_1\times  M_2)\not = M_1\times  M_2$. Thus there exists $(x_1,x_2) \in M_1\times  M_2 \setminus I(M_1\times  M_2)$. This implies that $x_1 \not \in I_1M_1$. Thus  $I_1M_1\not =M_1$ and $M_1$ satisfies the dual of proper strong  Property $\mathcal{A}$. Now let $r_1 \in R_1 \setminus  U(R_1) $. Clearly $(r_1, 0) \in W_{R_1 \times R_2}(M_1 \times M_2)$. Set  $
J=\langle (r_1,0), (0,1)\rangle$. Thus by part (a), $J(M_1\times  M_2)\not = M_1\times  M_2$. This implies that $r_1M_1 \not =M_1$ and hence $r_1 \in  W_{R_1}(M_1)$.
Now by Lemma \ref{l1.8},  $S_1^{-1}R_1=R_1$.   Similarly,  one can see that $M_2$ satisfies the dual of proper strong  Property $\mathcal{A}$ and $S_2^{-1}R_2=R_2$.

$(b)\Rightarrow (a)$
Let  $ I=\langle (a_1,b_1), (a_2,b_2),...,(a_n,b_n)\rangle$ be a proper finitely generated ideal of $R$ such that $(a_i,b_i) \in W_{R_1 \times R_2}(M_1 \times M_2)$ for each $i=1,2,...,n$. Set $I_1=\langle a_1,a_2,...,a_n\rangle$ and $I_2=\langle b_1,b_2,...,b_n\rangle$. Then as $I$ is proper, $I_1$ or $I_2$ is proper. Assume that $I_1$ is proper. Then by Lemma \ref{l1.8}, $I_1 \subseteq W_{R_1}(M_1)$. Thus by part (b), $I_1M_1 \not =M_1$. Hence there exists $x_1 \in M_1 \setminus I_1M_1$. Now  $(x_1,0) \in (M_1 \times M_2) \setminus I(M_1 \times M_2)$ implies that  $M$ satisfies the dual of proper strong  Property $\mathcal{A}$.
\end{proof}

\begin{thm}\label{t42.12}
Let $M$ be an $R$-module and $S=R \setminus W_R(M)$. Then we have the following.
\begin{itemize}
\item [(a)] If $S^{-1}R=R$, then $M$ satisfies the dual of proper strong Property $\mathcal{A}$ if and only if $M$ satisfies the dual of Property $\mathcal{A}$.
\item [(b)] If $S^{-1}R\not=R$, then $M$ satisfies the dual of proper strong Property $\mathcal{A}$ if and only if $M$ satisfies the dual of strong Property $\mathcal{A}$.
\end{itemize}
\end{thm}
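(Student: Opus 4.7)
The plan is to reduce each biconditional to a single nontrivial implication using Theorem \ref{l021.12}, which already supplies the chain dual of strong Property $\mathcal{A}$ $\Rightarrow$ dual of proper strong Property $\mathcal{A}$ $\Rightarrow$ dual of Property $\mathcal{A}$. Thus in (a) only dual of Property $\mathcal{A}$ $\Rightarrow$ dual of proper strong Property $\mathcal{A}$ requires proof, and in (b) only dual of proper strong Property $\mathcal{A}$ $\Rightarrow$ dual of strong Property $\mathcal{A}$. Lemma \ref{l1.8} will translate both hypotheses on $S^{-1}R$ into workable statements about $W_R(M)$.

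For part (a), given a proper finitely generated ideal $I = \langle a_1, \ldots, a_n\rangle$ with each $a_i \in W_R(M)$, I would invoke Lemma \ref{l1.8} to rewrite $S^{-1}R = R$ as $R = U(R) \cup W_R(M)$. Since $I$ is proper it contains no units, so every element of $I$ lies in $W_R(M)$, giving $I \subseteq W_R(M)$. The dual of Property $\mathcal{A}$ then immediately yields $IM \neq M$.

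For part (b), I first note that the dual of strong Property $\mathcal{A}$ is equivalent to the statement ``$\langle a_1, \ldots, a_n\rangle M \neq M$ for all $a_1, \ldots, a_n \in W_R(M)$'', since every proper submodule of $M$ is contained in some completely irreducible submodule and, conversely, every completely irreducible submodule is proper by definition. So I fix $a_1, \ldots, a_n \in W_R(M)$ and set $I = \langle a_1, \ldots, a_n\rangle$. If $I$ is proper, the dual of proper strong Property $\mathcal{A}$ finishes the argument. Otherwise $I = R$, and I argue by contradiction: by Lemma \ref{l1.8}, the hypothesis $S^{-1}R \neq R$ produces a non-unit $s \in R \setminus W_R(M)$. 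Writing $1 = \sum r_i a_i$ with $r_i \in R$ and multiplying by $s$ yields $s \in \langle sa_1, \ldots, sa_n\rangle \subseteq sR$, so $J := \langle sa_1, \ldots, sa_n\rangle = sR$ is a \emph{proper} ideal. Each $sa_i$ remains in $W_R(M)$ because $a_iM \neq M$ implies $(sa_i)M \subseteq a_iM \neq M$; hence dual of proper strong Property $\mathcal{A}$ forces $JM \neq M$. But $JM = sM = M$ since $s \notin W_R(M)$, the desired contradiction.

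The main obstacle is this last step: converting an ideal $I = \langle a_1, \ldots, a_n\rangle$ that happens to equal $R$ into a witnessed violation of dual of proper strong Property $\mathcal{A}$. The key is to multiply all generators by the non-unit $s \in R \setminus W_R(M)$ supplied by Lemma \ref{l1.8}; this simultaneously confines the new ideal to $sR$ (via the B\'ezout-type relation $1 = \sum r_i a_i$), keeps the generators inside $W_R(M)$ by scalar closure of $W_R(M)$, and yet leaves $sM = M$ intact.
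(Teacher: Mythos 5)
Your proof is correct, and while it rests on the same two pillars as the paper's argument --- Lemma \ref{l1.8} and the device of multiplying generators by a non-unit $s \in R \setminus W_R(M)$ --- it is organized differently. For (a) you argue directly that $S^{-1}R = R$ forces every proper finitely generated ideal with generators in $W_R(M)$ to lie entirely inside $W_R(M)$, so the dual of Property $\mathcal{A}$ applies at once; the paper instead routes through Theorem \ref{t1.10}, checking that $\mathfrak{m} \cap W_R(M) = \mathfrak{m}$ for every maximal ideal $\mathfrak{m}$. The two are equivalent, but yours is shorter and avoids the extra theorem. For (b) the paper takes an ideal $I = \langle a_1, \ldots, a_n\rangle$ with $a_i \in W_R(M)$, passes to $xI \subseteq \mathfrak{m}$ for a non-unit $x \notin W_R(M)$, gets $xIM \neq M$, extracts a completely irreducible $L \supseteq xIM$ via Remark \ref{r2.1}, and recovers $a_iM \subseteq IM = xIM \subseteq L$ from $xM = M$. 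You instead first establish that the dual of strong Property $\mathcal{A}$ is equivalent to the condition ``$\langle a_1,\ldots,a_n\rangle M \neq M$ for all $a_1,\ldots,a_n \in W_R(M)$'' (using that every proper submodule is contained in a proper completely irreducible one), and then deploy the multiplication trick only to exclude the case $\langle a_1,\ldots,a_n\rangle = R$. That reformulation is a genuinely useful observation the paper leaves implicit, and your explicit handling of the unit-ideal case in fact tidies a small imprecision in the paper, whose proof of (b) as written quantifies only over proper ideals $I$ even though the definition of the dual of strong Property $\mathcal{A}$ places no such restriction on $a_1, \ldots, a_n$.
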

\begin{proof}
(a) Since $S^{-1}R=R$, we have $W_R(M) =\cup_{\mathfrak{m} \in max(R)} \mathfrak{m}$ by Lemma \ref{l1.8}. Thus
far each maximal ideal $\mathfrak{m}$ of $R$, we have $\mathfrak{m} \cap W_R(M)=\mathfrak{m}$ is always an ideal of $R$. Now the result follows from Theorem \ref{t1.10}. The reverse implication is clear.

(b) If $M$ satisfies the dual of strong Property $\mathcal{A}$, then clearly,  $M$ satisfies the dual of proper strong Property $\mathcal{A}$. Conversely, assume that $M$ satisfies the dual of proper strong Property $\mathcal{A}$. As $S^{-1}R\not=R$, there exists $x \in R$ such that $x$ is not invertible and $x \not \in W_R(M)$ by Lemma \ref{l1.8}. Let $\mathfrak{m}$ be a maximal ideal of R such that $x \in m$. Let $I = \langle a_1, a_2, ..., a_n\rangle$  be a proper ideal of $R$ such that $a_i \in W_R(M)$ for $i = 1,..., n$. Then $xI = \langle xa_1,..., xa_n\rangle \subseteq  \mathfrak{m}$ is a proper ideal of $R$ and $xa_i \in W_R(M)$ for $i = 1,..., n$. Since $M$ satisfies the dual of proper strong Property $\mathcal{A}$, $xIM \not =M$. Thus there exists a completely irreducible submodule $L$ of $M$ such that $xIM\subseteq L \not =M$ by Remark \ref{r2.1}. Now, as $x \not \in W_R(M)$, it
follows that $IM\subseteq L \not =M$. This implies that $a_iM\subseteq L \not =M$  for $i = 1,..., n$. Hence  $M$ satisfies the dual of strong Property $\mathcal{A}$.
\end{proof}

\begin{prop}\label{t4332.12}
 Let $R$ be a zero-dimensional ring. Then any faithful $R$-module $M$ satisfies the dual of proper strong Property $\mathcal{A}$. In particular, any $R$-module $M$ satisfies the dual of proper strong Property $\mathcal{A}$ over $R/Ann_R(M)$.
\end{prop}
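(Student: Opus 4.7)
The plan is to combine Theorem \ref{t1.6}(c) with Theorem \ref{t42.12}(a). Theorem \ref{t1.6}(c) already tells us that every module over a zero-dimensional ring satisfies the dual of Property $\mathcal{A}$, so the dual of Property $\mathcal{A}$ half of what we need comes for free. To upgrade this to the dual of proper strong Property $\mathcal{A}$ via Theorem \ref{t42.12}(a), I must verify that $S^{-1}R = R$, where $S = R \setminus W_R(M)$. By Lemma \ref{l1.8}, this is equivalent to $R = U(R) \cup W_R(M)$, that is, to showing that every non-unit of $R$ lies in $W_R(M)$.

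The core step is therefore: for a faithful module $M$ over a zero-dimensional commutative ring $R$, every non-unit $r \in R$ satisfies $rM \neq M$. I would invoke the standard characterization of commutative zero-dimensional rings as strongly $\pi$-regular: for each $r \in R$ there exist $n \geq 1$ and $s \in R$ with $r^{n} = r^{n+1}s$, equivalently $r^{n}(1-rs)=0$. Suppose for contradiction that $r$ is a non-unit and $rM = M$. Iterating gives $r^{n}M = M$, and then $(1-rs)M = (1-rs)r^{n}M = r^{n}(1-rs)M = 0$. Since $M$ is faithful, $1-rs = 0$, so $rs = 1$, contradicting that $r$ is a non-unit. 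Hence every non-unit belongs to $W_R(M)$, as required.

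Putting the pieces together: $S$ consists only of units, so $S^{-1}R = R$ by Lemma \ref{l1.8}; Theorem \ref{t1.6}(c) gives the dual of Property $\mathcal{A}$; and Theorem \ref{t42.12}(a) then promotes this to the dual of proper strong Property $\mathcal{A}$. For the final ``In particular'' clause, I would observe that $\bar R = R/\mathrm{Ann}_R(M)$ is a quotient of a zero-dimensional ring, hence itself zero-dimensional, while $M$ is a faithful $\bar R$-module by construction; applying the first part over $\bar R$ yields the conclusion.

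The main obstacle I anticipate is the strongly $\pi$-regular step $r^{n}(1-rs)=0$: this is the only non-formal ingredient, and everything else is just plugging into the earlier results. If one did not wish to quote strong $\pi$-regularity, one could instead argue locally (for any maximal $\mathfrak{m}$, $R_\mathfrak{m}$ is zero-dimensional local, hence its maximal ideal is nil, giving $sr^{n}=0$ for some $s \notin \mathfrak{m}$) and then globalize; but citing the standard characterization is cleaner and keeps the proof short.
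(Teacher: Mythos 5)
Your proof is correct, and its overall skeleton coincides with the paper's: both first invoke Theorem~\ref{t1.6}(c) for the dual of Property $\mathcal{A}$, then verify $S^{-1}R=R$ for $S=R\setminus W_R(M)$, and finally apply Theorem~\ref{t42.12}(a). The difference lies entirely in how that middle verification is carried out. The paper argues that $W_R(M)\subseteq Z_R(M)=Z_R(R)$ by appealing to the proof of \cite[Corollary 2.20]{MR4160990}, notes that the reverse inclusion $Z_R(R)\subseteq W_R(M)$ is clear for faithful modules, concludes $W_R(M)=Z_R(R)$, and then uses (implicitly, again via zero-dimensionality) that every non-zero-divisor of a zero-dimensional ring is a unit. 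You instead go straight at the target $R=U(R)\cup W_R(M)$ demanded by Lemma~\ref{l1.8}: from $\pi$-regularity you get $r^{n}(1-rs)=0$, and if $rM=M$ then $(1-rs)M=0$, so faithfulness forces $rs=1$. This is a genuine improvement in self-containedness --- it avoids importing the identity $Z_R(M)=Z_R(R)$ from an external proof, and it isolates exactly where faithfulness and zero-dimensionality enter --- at the modest cost of quoting the standard fact that commutative zero-dimensional rings are (strongly) $\pi$-regular, a fact the paper's route also relies on implicitly when it passes from $S=R\setminus Z_R(R)$ to $S^{-1}R=R$. Your handling of the ``in particular'' clause (pass to the zero-dimensional quotient $R/\mathrm{Ann}_R(M)$, over which $M$ is faithful) matches the intended reading of the statement.
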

\begin{proof}
 By Theorem \ref{t1.6} (c), $M$ satisfies the dual of Property $\mathcal{A}$.
 We have $W_R(M) \subseteq Z_R(M)=Z_R(R)$ by using the proof of \cite[Corollary 2.20]{MR4160990}. Therefore,  $W_R(M)=Z_R(R)$ because the inverse inclusion is clear. Thus $S^{-1}R=R$, where $S=R \setminus W_R(M)=R \setminus Z_R(R)$. This implies that $M$ satisfies the dual of proper strong Property $\mathcal{A}$ by Theorem \ref{t42.12} (a).
\end{proof}

\begin{thm}\label{l02.12}
Let $R_i$ be a commutative ring with identity and $M_i$ be an $R_i$-module for each $i = 1, 2$. Let
$M = M_1\times  M_2$, $R = R_1\times  R_2$, and $S_i=R_i \setminus Z_{R_i}(M_i)$. Then
the following assertions are equivalent:
\begin{itemize}
\item [(a)] $M$ satisfies the proper strong  Property $\mathcal{A}$;
\item [(b)] $M_i$ satisfies the proper strong Property $\mathcal{A}$  and $S_i^{-1}R_i=R_i$ for each $i = 1, 2$.
\end{itemize}
\end{thm}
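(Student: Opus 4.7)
This is the non-dual counterpart of Theorem \ref{l2.12}: replace $W_R(-)$ by $Z_R(-)$ and ``$IM\neq M$'' by ``$(0:_M I)\neq 0$''. I would mirror the proof of Theorem \ref{l2.12} almost verbatim, after first recording two auxiliary facts in the $Z_R$ setting. The first is the analogue of Lemma \ref{l1.8}: with $S=R\setminus Z_R(M)$, one has $S^{-1}R=R$ if and only if $R=U(R)\cup Z_R(M)$; the proof is formal and identical to the one of Lemma \ref{l1.8}. The second is the identity
\[
Z_{R_1\times R_2}(M_1\times M_2)=(Z_{R_1}(M_1)\times R_2)\cup(R_1\times Z_{R_2}(M_2)),
\]
where $\supseteq$ is clear, and for $\subseteq$ a witness $(m_1,m_2)\neq 0$ for $(a,b)(m_1,m_2)=0$ has $m_1\neq 0$ or $m_2\neq 0$, giving $a\in Z_{R_1}(M_1)$ in the first case and $b\in Z_{R_2}(M_2)$ in the second.

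For $(a)\Rightarrow(b)$, given a finitely generated ideal $I_1=\langle a_1,\ldots,a_n\rangle$ of $R_1$ with $I_1\subseteq Z_{R_1}(M_1)$, I would form
\[
I=\langle(a_1,0),\ldots,(a_n,0),(0,1)\rangle=I_1\times R_2,
\]
which is proper (since $I_1$ is proper, as $1\notin Z_{R_1}(M_1)$ when $M_1\neq 0$) and whose generators all lie in $Z_R(M)$ by the identity above. The hypothesis (a) produces $0\neq(x_1,x_2)\in(0:_M I)$; the generator $(0,1)$ forces $x_2=0$, so $x_1\neq 0$ and $x_1\in(0:_{M_1}I_1)$, giving the proper strong Property $\mathcal{A}$ for $M_1$. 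To see that $S_1^{-1}R_1=R_1$, I would pick any $r_1\in R_1\setminus U(R_1)$ and apply (a) to the proper ideal $J=\langle(r_1,0),(0,1)\rangle$, again with generators in $Z_R(M)$; the same argument yields $x_1\neq 0$ with $r_1 x_1=0$, hence $r_1\in Z_{R_1}(M_1)$. By the analogue of Lemma \ref{l1.8}, this is exactly $S_1^{-1}R_1=R_1$. The argument for $i=2$ is symmetric.

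For $(b)\Rightarrow(a)$, given a proper finitely generated ideal $I=\langle(a_1,b_1),\ldots,(a_n,b_n)\rangle$ with each $(a_i,b_i)\in Z_R(M)$, I would first observe $I=I_1\times I_2$ where $I_j$ is generated by the $j$-th coordinates (the inclusion $I_1\times I_2\subseteq I$ coming from $(x,0)=\sum(c_j,0)(a_j,b_j)$ and similarly on the second factor). Hence $I$ proper forces, say, $I_1$ proper. Now comes the one decisive use of the hypothesis: since $R_1=U(R_1)\cup Z_{R_1}(M_1)$ by the Lemma \ref{l1.8} analogue, and no $a_i$ can be a unit (else $I_1=R_1$), each $a_i\in Z_{R_1}(M_1)$, so $I_1\subseteq Z_{R_1}(M_1)$. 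The proper strong Property $\mathcal{A}$ of $M_1$ then supplies $0\neq x_1\in(0:_{M_1}I_1)$, and $(x_1,0)$ is a nonzero element of $(0:_M I)$, which proves (a).

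The main obstacle is precisely the step just highlighted in $(b)\Rightarrow(a)$: without the condition $S_1^{-1}R_1=R_1$ one cannot conclude $I_1\subseteq Z_{R_1}(M_1)$ from $I_1$ being a proper ideal whose generators lie in $Z_{R_1}(M_1)\cup(\text{non-zero-divisors})$, so the proper strong Property $\mathcal{A}$ of $M_1$ would be inapplicable. Everything else is bookkeeping paralleling Theorem \ref{l2.12}.
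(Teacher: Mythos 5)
Your proposal is correct and follows essentially the same route as the paper: the same auxiliary ideals $\langle (a_1,0),\dots,(a_n,0),(0,1)\rangle$ and $\langle (r_1,0),(0,1)\rangle$ for $(a)\Rightarrow(b)$, the same coordinate decomposition $I=I_1\times I_2$ for $(b)\Rightarrow(a)$, and the same characterization $S^{-1}R=R\Leftrightarrow R=U(R)\cup Z_R(M)$ (which the paper imports as \cite[Lemma 2.1]{MR4160990}). You merely make explicit two facts the paper uses tacitly, namely the product formula for $Z_{R_1\times R_2}(M_1\times M_2)$ and the precise point where $S_1^{-1}R_1=R_1$ is needed.
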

\begin{proof}
$(a)\Rightarrow (b)$
Assume that $M$ satisfies the proper strong Property $\mathcal{A}$ and $I_1=\langle a_1,a_2,...,a_n\rangle$ is a finitely generated ideal of $R_1$ such that $I_1 \subseteq Z_{R_1}(M_1)$. Set
 $$
 I=\langle (a_1,0), (a_2,0),...,(a_n,0), (0,1)\rangle.
 $$
 Then $(0,1), (a_i,0) \in Z_{R_1 \times R_2}(M_1 \times M_2)$ for $i=1,2,...,n$. By part (a), $(0:_{M_1\times  M_2}I) \not=0$. Thus there exists $0\not=(x_1,x_2) \in M_1\times  M_2$ such that $I(x_1,x_2)=0$.  This implies that $0\not=x_1 \subseteq (0:_{M_1}I_1)$ and $M_1$ satisfies the proper strong  Property $\mathcal{A}$. Now let $r_1 \in R_1 \setminus  U(R_1) $. Clearly $(r_1, 0) \in Z_{R_1 \times R_2}(M_1 \times M_2)$. Set  $
J=\langle (r_1,0), (0,1)\rangle$. Thus by part (a), $(0:_{M_1\times  M_2}J)\not =0$. This implies that  $r_1 \in  Z_{R_1}(M_1)$.
Now by  \cite[Lemma 2.1]{MR4160990},  $S_1^{-1}R_1=R_1$.   Similarly,  one can see that $M_2$ satisfies the proper strong  Property $\mathcal{A}$ and $S_2^{-1}R_2=R_2$.

$(b)\Rightarrow (a)$
Let  $ I=\langle (a_1,b_1), (a_2,b_2),...,(a_n,b_n)\rangle$ be a proper finitely generated ideal of $R$ such that $(a_i,b_i) \in Z_{R_1 \times R_2}(M_1 \times M_2)$ for each $i=1,2,...,n$. Set $I_1=\langle a_1,a_2,...,a_n\rangle$ and $I_2=\langle b_1,b_2,...,b_n\rangle$. Then as $I$ is proper, $I_1$ or $I_2$ is proper. Assume that $I_1$ is proper. Then $I_1 \subseteq Z_{R_1}(M_1)$. Thus by part (b), $(0:_{M_1}I_1) \not =0$. Hence there exists $0\not =x_1 \in M_1$ such that $I_1x_1=0$. Now  $(0,0) \not=(x_1,0) \in (0:_{M_1 \times M_2}I)$ implies that $M$ satisfies the proper strong  Property $\mathcal{A}$.
\end{proof}

The following examples show that the concepts of proper strong Property $\mathcal{A}$ and the dual of proper strong Property $\mathcal{A}$ are different in general.

\begin{ex}\label{e2.2}
Consider the $\Bbb Z\times \Bbb Z$-module $\Bbb Z \times \Bbb Z$. As $W_{\Bbb Z}(\Bbb Z)=\Bbb Z \setminus \{1, -1\}$, we have
 $S=\Bbb Z \setminus W_{\Bbb Z}(\Bbb Z)=  \{1, -1\}$ and so $S^{-1} \Bbb Z= \Bbb Z$. Now Theorem \ref{l2.12} implies that $\Bbb Z\times \Bbb Z$ satisfies the dual of proper strong Property $\mathcal{A}$. But by \cite[Example 2.13 (1)]{MR4160990},  $\Bbb Z\times \Bbb Z$ not satisfies the proper strong Property $\mathcal{A}$.
\end{ex}

\begin{ex}\label{e402.2}
Consider the $\Bbb Z\times \Bbb Z$-module $\Bbb {Q/Z}\times \Bbb {Q/Z}$. Since $W_{\Bbb Z}(\Bbb {Q/Z})=\{0\}$, we have
$S=\Bbb Z \setminus W_{\Bbb Z}(\Bbb {Q/Z})=\Bbb Z \setminus \{0\}$
 and so $\Bbb {Q}=S^{-1} \Bbb Z\not= \Bbb Z$. Now Theorem \ref{l2.12} implies that $\Bbb {Q/Z}\times \Bbb {Q/Z}$ not satisfies the dual of proper strong Property $\mathcal{A}$. On the other hand since $Z_{\Bbb Z}(\Bbb {Q/Z})=\Bbb Z \setminus \{1, -1\}$, we have
$S=\Bbb Z \setminus W_{\Bbb Z}(\Bbb {Q/Z})= \{1, -1,\}$
 and so $S^{-1} \Bbb Z= \Bbb Z$. Clearly, the $\Bbb Z$-module $\Bbb {Q/Z}$  satisfies the proper strong Property $\mathcal{A}$. Now Theorem \ref{l02.12}, implies that $\Bbb {Q/Z}\times \Bbb {Q/Z}$ satisfies the proper strong Property $\mathcal{A}$.
\end{ex}

\begin{thm}\label{p002.12}
 Let $M=R/\mathfrak{m}_1 \oplus R/\mathfrak{m}_2\oplus ...\oplus R/\mathfrak{m}_n$ be an $R$-module, where $\mathfrak{m}_i\in max(R)$ for
$i = 1, ..., n$.  Then $M$ satisfies the dual of proper strong Property $\mathcal{A}$ if and only if either $max(R) =\{\mathfrak{m}_1,\mathfrak{m}_2,...,\mathfrak{m}_n\}$ or $\mathfrak{m}_1 = \mathfrak{m}_2 = ... = \mathfrak{m}_n$.
\end{thm}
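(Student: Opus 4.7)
The plan is to compute $W_R(M)$ explicitly, split on the dichotomy furnished by Theorem \ref{t42.12}, and in each branch invoke the appropriate characterization from the machinery already developed. To start, observe that for $r\in R$ we have $r(R/\mathfrak{m}_i)=R/\mathfrak{m}_i$ iff $r\notin\mathfrak{m}_i$, so $rM=M$ iff $r$ lies outside every $\mathfrak{m}_i$, giving $W_R(M)=\bigcup_{i=1}^n\mathfrak{m}_i$. Set $S=R\setminus W_R(M)$. By Lemma \ref{l1.8}, $S^{-1}R=R$ iff $R=U(R)\cup\bigcup\mathfrak{m}_i$, and prime avoidance (each maximal ideal $\mathfrak{n}$ satisfies $\mathfrak{n}\subseteq\bigcup\mathfrak{m}_i$ iff $\mathfrak{n}=\mathfrak{m}_i$ for some $i$) converts this to the statement $\max(R)=\{\mathfrak{m}_1,\ldots,\mathfrak{m}_n\}$. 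Thus the first alternative in the conclusion is exactly the condition $S^{-1}R=R$.

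In the branch $\max(R)=\{\mathfrak{m}_1,\ldots,\mathfrak{m}_n\}$, Theorem \ref{t42.12}(a) reduces the dual of proper strong Property $\mathcal{A}$ to the dual of Property $\mathcal{A}$, which holds unconditionally by Theorem \ref{t1.6}(d) since $M$ is finitely generated. So in this branch $M$ automatically satisfies the dual of proper strong Property $\mathcal{A}$.

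In the opposite branch, $S^{-1}R\neq R$, Theorem \ref{t42.12}(b) tells us the dual of proper strong Property $\mathcal{A}$ is equivalent to the dual of strong Property $\mathcal{A}$; it remains to show the latter holds iff $\mathfrak{m}_1=\cdots=\mathfrak{m}_n$. For necessity, Theorem \ref{tt1.5} forces $M$ to be secondal, so $W_R(M)=\bigcup\mathfrak{m}_i$ is an ideal; prime avoidance then gives $\bigcup\mathfrak{m}_i\subseteq\mathfrak{m}_j$ for some $j$, and maximality collapses all $\mathfrak{m}_i$ to $\mathfrak{m}_j$. For sufficiency, if all $\mathfrak{m}_i$ equal a common $\mathfrak{m}$, then $M=(R/\mathfrak{m})^n$ is an $R/\mathfrak{m}$-vector space annihilated by $\mathfrak{m}=W_R(M)$, so $a_iM=0$ for each $a_i\in W_R(M)$; any completely irreducible proper submodule $L$ of $M$ (a hyperplane when $n\geq 2$, or $L=0$ when $n=1$, which is completely irreducible inside a simple module) then satisfies $a_iM\subseteq L\neq M$, verifying the dual of strong Property $\mathcal{A}$.

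The main obstacle is not in any individual step, which is short once the framework is set, but in correctly aligning the inclusive ``or'' in the theorem with the dichotomy of Theorem \ref{t42.12}: the first alternative matches Case $S^{-1}R=R$, while in the case $S^{-1}R\neq R$ the extra hypothesis ``all $\mathfrak{m}_i$ equal'' is exactly what makes $W_R(M)$ into an ideal and hence delivers the dual of strong Property $\mathcal{A}$. Beyond that, everything reduces to prime avoidance and to quoting Theorems \ref{tt1.5}, \ref{t1.6}, and \ref{t42.12}.
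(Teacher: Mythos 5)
Your proof is correct and follows essentially the same route as the paper: compute $W_R(M)=\bigcup\mathfrak{m}_i$, split on whether $S^{-1}R=R$ via Lemma \ref{l1.8}, apply Theorem \ref{t42.12}(a) with Theorem \ref{t1.6}(d) in the first case, and Theorem \ref{t42.12}(b) with Theorem \ref{tt1.5} in the second. If anything, your explicit verification that $\mathfrak{m}M=0$ yields the dual of strong Property $\mathcal{A}$ when all the $\mathfrak{m}_i$ coincide is more careful than the paper's bare assertion that $W_R(M)$ being an ideal suffices.
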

\begin{proof}
First, note that $W_R(M) =\mathfrak{m}_1\cup \mathfrak{m}_2 \cup...\cup \mathfrak{m}_n$. So, it is easy to see that any ideal $I$
contained in  $W_R(M)$ is contained in some maximal ideal $m_j$. By Theorem \ref{t1.6} (d), $M$ satisfies the dual of Property $\mathcal{A}$. Now, assume that $max(R) =\{\mathfrak{m}_1,\mathfrak{m}_2,...,\mathfrak{m}_n\}$.
Then, we get $S^{-1}R=R$. Then, using Theorem \ref{t42.12} (a),  $M$ satisfies the dual of proper strong Property $\mathcal{A}$.
On the other hand, suppose that $\mathfrak{m}_1 = \mathfrak{m}_2 = ... = \mathfrak{m}_n$. Then $W_R(M)=\mathfrak{m}$ is an ideal of $R$ and hence $M$ satisfies the dual of strong  Property $\mathcal{A}$. It follows that $M$ satisfies the dual of proper strong  Property $\mathcal{A}$.  Conversely, assume
that $\{\mathfrak{m}_1,\mathfrak{m}_2,...,\mathfrak{m}_n\} \subset max(R)$ and that $card(\{\mathfrak{m}_1,\mathfrak{m}_2,...,\mathfrak{m}_n\}) \geq 2$. Then, by Lemma \ref{l1.8}, $S^{-1}R \not=R$ as there exists a maximal ideal $\mathfrak{m} \not \subseteq \mathfrak{m}_1\cup \mathfrak{m}_2 \cup...\cup \mathfrak{m}_n=W_R(M)$ and thus there exists an
element $x \in \mathfrak{m}$ which is neither invertible nor $x \in W_R(M)$. Suppose contrary that $W_R(M)$ is an ideal. Then as $W_R(M) =\mathfrak{m}_1\cup \mathfrak{m}_2 \cup...\cup \mathfrak{m}_n$, there exists $j \in \{1,..., n\}$ such that $W_R(M)= m_j$. Hence $\mathfrak{m}_1 = \mathfrak{m}_2 = ... = \mathfrak{m}_n$, which is a contradiction. Therefore, $W_R(M)$ is not an ideal and so $M$ not satisfies the dual of strong Property $\mathcal{A}$ by Theorem \ref{tt1.5}. Therefore, by Theorem \ref{t42.12} (b), $M$ not satisfies the dual of proper strong Property $\mathcal{A}$, as needed.
\end{proof}
\section{Properties $\mathcal{S_J(N)}$ and $\mathcal{I^M_J(N)}$}
Let $J$ be an ideal of $R$ and let $N$ be a
submodule of an $R$-module $M$. Set
$$
\mathcal{S_J(N)}= \{m \in M\mid rm \in N \\\ for \\\ some \\\ r \in R-J \}.
$$
 When $J$ is a prime ideal of $R$, then $\mathcal{S_J(N)}$ is called \emph{the saturation of $N$ with respect to $J$ or $J$-closure of $N$}
\cite{MR3525784, MR1986210, MR2417474}.

Set
$$
\mathcal{I^M_J(N)}= \cap \{L \mid  L \\\ is \\\ a \\\ completely \\\
irreducible
\\\ submodule \\\ of \\\ M\\\ and
$$
$$
 rN\subseteq L \\\ for \\\ some \\\ r \in R-J \}.
$$
 When $J$ is a prime ideal of $R$, then $\mathcal{I^M_J(N)}$ is called \emph{the $J$-interior of $N$ relative to $M$}
\cite{MR3073398, MR2917107, MR3588217}.

\begin{defn}\label{d2.1}
We say that a submodule $N$ of an $R$-module $M$ satisfies \textit{Property $\mathcal{S_J(N)}$} if for each finitely generated submodule $K$ of $M$ with $K \subseteq \mathcal{S_J(N)}$
there exists a  $r \in R \setminus J$ with $rK \subseteq N$.
\end{defn}

\begin{defn}\label{d2.2}
We say that a submodule $N$ of an $R$-module $M$ satisfies \textit{Property $\mathcal{I^M_J(N)}$} (that is the dual of Property $\mathcal{S_J(N)}$) if for each  submodule  $K$ of $M$ with $M/K$ is finitely cogenerated and  $\mathcal{I^M_J(N)}\subseteq K$
there exists a $r \in R \setminus J$  with $rN \subseteq K$.
\end{defn}

\begin{defn}\label{d2.3}
We say that a submodule $N$ of an $R$-module $M$ satisfies \textit{the strong Property $\mathcal{S_J(N)}$} if for
any $m_1, . . . , m_n \in \mathcal{S_J(N)}$
there exists a $r \in R \setminus J$  with $rm_1 \in N$, ...$rm_n\in N$.
\end{defn}

\begin{ex}\label{e22.3}
Let $J$ be a prime ideal of $R$ and $N$ be a submodule of an $R$-module $M$ such that $\mathcal{S_J(N)}$ is a finitely generated submodule of $M$. Then one can see that there exists a  $r \in R \setminus J$  with $rN \subseteq \mathcal{S_J(N)}$. This implies that
$N$ satisfies the strong Property $\mathcal{S_J(N)}$.
\end{ex}

\begin{ex}\label{e23.3}
Let $J$ be a prime ideal of $R$ and $N$ be a submodule of an $R$-module $M$ such that $M/\mathcal{S_J(N)}$ is a finitely cogenerated $R$-module. Then $N$ satisfies Property $\mathcal{I^M_J(N)}$ by using \cite[Lemma 2.3]{MR3588217}.
\end{ex}

If $J=0$ and $N=0$ in Definition \ref{d2.1} (resp. Definition \ref{d2.3}), then $M$ under the name Property $T$ (strong Property $T$) was studied in \cite{MR3661610}.

\begin{thm}\label{t2.5}
Let $M$ be an $R$-module. Then we have the following.
\begin{itemize}
\item [(a)] A submodule $N$ of $M$ satisfies the strong Property $\mathcal{S_J(N)}$ if and only if $N$ satisfies Property  $\mathcal{S_J(N)}$ and  $\mathcal{S_J(N)}$ is a submodule of $M$.
\item [(b)] The zero submodule of $M$ satisfies the strong Property $\mathcal{S_J(0)}$.
\item [(c)] If a submodule $N$ of $M$ satisfies Property $\mathcal{S_J(N)}$ (respectively, strong Property $\mathcal{S_J(N)}$) and $N\subseteq K\subseteq \mathcal{S_J(N)}$, then $K$ satisfies Property $\mathcal{S_J(K)}$ (respectively,  strong Property $\mathcal{S_J(K)}$).
\item [(d)] Let $\mathfrak{\psi}=\{N_\lambda\}_ {\lambda\in \Lambda}$ be a chain of submodules of $M$ with $N \subseteq N_{\lambda} \subseteq \mathcal{S_J(N)}$ for each $\lambda\in \Lambda$. Then
    $\cup_{\lambda\in \Lambda} N_\lambda$ satisfies   Property $\mathcal{S_J(\cup_{\lambda \in \Lambda}N_\lambda)}$
(respectively,  strong Property $\mathcal{S_J(\cup_{\lambda\in \Lambda}N_\lambda)}$) if and only if each $N_\lambda$
satisfies  Property $\mathcal{S_J(N_\lambda)}$
(respectively,  strong Property $\mathcal{S_J(N_\lambda)}$).
\item [(e)] If for a submodule $N$ of $M$ we have $Ann_R(M/N)\not \subseteq J$, or more generally, $Ann_R(\mathcal{S_J(N)/N)}\not \subseteq J$, then $N$ satisfies  strong Property $\mathcal{S_J(N)}$.
\item [(f)] If $J$ is an irreducible ideal of $R$ (e.g., $R/J$ is an integral domain), then every submodule $N$ of $M$ satisfies  strong Property $\mathcal{S_J(N)}$.
\item [(g)] If a submodule $N$ of $M$ is a Bezout module (respectively, chained module), then $N$ satisfies Property
$\mathcal{S_J(N)}$ (respectively,  strong Property $\mathcal{S_J(N)}$).
\end{itemize}
\end{thm}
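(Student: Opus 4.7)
The plan is to verify the seven assertions by unpacking Definitions~\ref{d2.1}--\ref{d2.3} together with the description of $\mathcal{S_J(N)}$, treating most parts as short computations and flagging the few places where structural hypotheses on $J$ or on the module are essential.

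For (a), I would show the forward direction by applying the strong property to a finite generating set of any finitely generated $K\subseteq\mathcal{S_J(N)}$, which produces Property~$\mathcal{S_J(N)}$, and by applying it to the two-element lists $\{m,m'\}$ and $\{m,rm\}$ to see that $\mathcal{S_J(N)}$ is closed under subtraction and scalar multiplication, hence is a submodule. Conversely, a finite list $m_1,\dots,m_n\in\mathcal{S_J(N)}$ generates a finitely generated submodule of the submodule $\mathcal{S_J(N)}$, and a single application of Property~$\mathcal{S_J(N)}$ supplies the common $r$. Part (b) is immediate once one assumes (as in the surrounding examples) that $J$ is prime: each $m_i\in\mathcal{S_J(0)}$ gives some $r_i\in R\setminus J$ with $r_im_i=0$, and the product $r_1\cdots r_n\in R\setminus J$ annihilates every $m_i$ simultaneously.

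For (c), $N\subseteq K$ yields $\mathcal{S_J(N)}\subseteq\mathcal{S_J(K)}$, and for any finitely generated $L=\langle m_1,\dots,m_n\rangle\subseteq\mathcal{S_J(K)}$ I pick $r_i\in R\setminus J$ with $r_im_i\in K\subseteq\mathcal{S_J(N)}$, apply Property~$\mathcal{S_J(N)}$ to $\langle r_1m_1,\dots,r_nm_n\rangle$ to obtain $s\in R\setminus J$ with $sr_im_i\in N\subseteq K$, and combine $s$ with a product of the $r_i$'s to produce an element of $R\setminus J$ pushing every $m_i$ into $K$. Part (d) rests on the identity $\mathcal{S_J}\bigl(\bigcup_\lambda N_\lambda\bigr)=\bigcup_\lambda \mathcal{S_J}(N_\lambda)$ together with the observation that any finitely generated submodule of a chain-union already sits in a single member of the chain, so a witness for one $N_\lambda$ transfers to the union and vice versa.

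Part (e) is transparent: any element $r\in \mathrm{Ann}_R(\mathcal{S_J(N)}/N)\setminus J$ satisfies $r\mathcal{S_J(N)}\subseteq N$ and therefore serves as a universal witness for the strong property on any finite subset of $\mathcal{S_J(N)}$. For (f), the translation is $m_i\in\mathcal{S_J(N)}\iff (N:_R m_i)\not\subseteq J$; irreducibility of $J$ in the strong form ($I_1\cap I_2\subseteq J$ forces $I_1\subseteq J$ or $I_2\subseteq J$, which holds whenever $R/J$ is a domain) iterates to $\bigcap_i(N:_R m_i)\not\subseteq J$, and any element of that intersection outside $J$ supplies the desired $r$. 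Part (g) is by reduction to the cyclic case: in the B\'ezout setting the finitely generated $K$ is cyclic, so $K=\langle k\rangle$ with $k\in\mathcal{S_J(N)}$ and the defining $r$ for $k$ works; in the chained setting the cyclic submodules $\langle m_1\rangle,\dots,\langle m_n\rangle$ are totally ordered, so some $\langle m_j\rangle$ absorbs the rest and the witness $r_j\in R\setminus J$ with $r_jm_j\in N$ then works uniformly.

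The main obstacle throughout is the consolidation of several witnesses $r_1,\dots,r_n\in R\setminus J$ into a single $r\in R\setminus J$: the step in (b), (c), (d), and (f) requires that products or finite intersections involving elements outside $J$ avoid $J$, which is precisely the role of $J$ being prime or strongly irreducible. I would therefore be explicit about that additional hypothesis each time it is invoked, and note that for a generic ideal $J$ several of these assertions would need to be weakened accordingly.
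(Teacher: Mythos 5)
Your argument is correct, and for the parts the paper actually writes out ((d) and (f)) it follows essentially the same route: a finitely generated submodule of the saturation of a chain-union lands inside the saturation of a single member, and irreducibility of $J$ supplies a common multiplier outside $J$. The one genuine divergence is in (c): the paper never combines witnesses there, but instead observes that $K\subseteq \mathcal{S_J(N)}$ gives $\mathcal{S_J(K)}\subseteq\mathcal{S_J(\mathcal{S_J(N)})}=\mathcal{S_J(N)}$, so a single application of Property $\mathcal{S_J(N)}$ to the finitely generated $T\subseteq\mathcal{S_J(K)}$ already produces one $r\in R\setminus J$ with $rT\subseteq N\subseteq K$; your version picks a witness per generator and multiplies them, which is slightly longer but costs nothing extra, since the idempotence of the saturation used by the paper requires $R\setminus J$ to be multiplicatively closed just as your product does. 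Your explicit flagging of that hypothesis is the most valuable part of the write-up and should be retained: the paper leaves (a), (b), (e), (g) as ``straightforward'' and never states that $J$ is prime, yet (b) with $J=0$ over a ring that is not a domain would assert that every module satisfies strong Property $\mathcal{T}$, which is known to fail; similarly, in (f) the passage from $Rr_1\cap\cdots\cap Rr_n\neq J$ to the existence of $r\in(Rr_1\cap\cdots\cap Rr_n)\setminus J$ tacitly uses the strong form of irreducibility that you state explicitly. So read the theorem with $J$ prime (or strongly irreducible where indicated), exactly as you propose.
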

\begin{proof}
(a), (b), (e), and (g) are straightforward.

(c) Let $T$ be a finitely generated submodule of $M$ with $T \subseteq \mathcal{S_J(K)}$. Then  $T \subseteq \mathcal{S_J(K)} \subseteq \mathcal{S_J(\mathcal{S_J(N)})}$. Clearly,  $\mathcal{S_J(\mathcal{S_J(N)})}=\mathcal{S_J(N)}$. Hence $T\subseteq \mathcal{S_J(N)}$. Now, by assumption, there exists a  $r \in R \setminus J$ with $rT \subseteq N$ and so $rT \subseteq K$.

(d) First note that, if $\cup_{\lambda\in \Lambda} N_\lambda$ satisfies Property $\mathcal{S_J(\cup_{\lambda\in \Lambda} N_\lambda)}$
(respectively, strong Property $\mathcal{S_J(\cup_{\lambda\in \Lambda}N_\lambda)}$), then each $N_\lambda$
satisfies Property $\mathcal{S_J(N_\lambda)}$
(respectively, strong Property $\mathcal{S_J(N_\lambda)}$). Conversely, suppose that  each $N_\lambda$
satisfies Property $\mathcal{S_J(N_\lambda)}$
(respectively, strong Property $\mathcal{S_J(N_\lambda)}$) and $K$ is a finitely generated submodule
of $\mathcal{S_J(\cup_{\lambda\in \Lambda}N_\lambda)}$. Then $K$ is a submodule of  $\mathcal{S_J(N_\alpha)}$ for some $\alpha \in \Lambda$ and
hence $(N_\alpha:_RK)\not\subseteq I$ and so $(\cup_{\lambda\in \Lambda}N_\lambda:_RK)\not\subseteq I$, as desired.

(f) Let $J$ be an irreducible ideal of $R$ and $m_1, . . . , m_n\in \mathcal{S_J(N)}$, where
$r_im_i\in N$ with $r_i \in R \setminus J$ for $i=1,2,..,n$. Since $J$ is irreducible, $Rr_1\cap ... \cap Rr_n\not =J$. Hence there exists $r \in (Rr_1\cap ...\cap Rr_n) \setminus J$.  Thus $rm_i \in N$ for $i=1,2,..,n$, as needed.
\end{proof}

\begin{thm}\label{t2.6}
Let $M$ be an $R$-module. Then we have the following.
\begin{itemize}
\item [(a)] Let $J$ be a prime ideal of $R$.
 A submodule $N$ of $M$ satisfies Property $\mathcal{I^M_J(N)}$ if and only if
for any completely irreducible submodules $L_1$, ..., $L_n$ of $M$ with $\mathcal{I^M_J(N)} \subseteq L_1$, ..., $\mathcal{I^M_J(N)} \subseteq L_n$ there exists a $r \in R\setminus J$ with $rN \subseteq L_1$,..., $rN \subseteq L_n$.
\item [(b)] $M$ satisfies Property $\mathcal{I^M_\mathrm{0} (M)}$ if and only if every submodule $K$ of $M$ with $M/K$ is finitely cogenerated $M/K$  satisfies Property $\mathcal{I^{M/K}_\mathrm{0}(M/K)}$.
\item [(c)] If for a submodule $N$ of $M$ we have $Ann_R(N)\not \subseteq J$, or more generally, $Ann_R(N/\mathcal{I^M_J(N))} \not \subseteq J$, then $N$ satisfies Property $\mathcal{I^M_J(N)}$.
\item [(d)] If $J$ is an irreducible ideal of $R$ (e.g., $R/J$ is an integral domain), then every submodule $N$ of $M$ satisfies Property $\mathcal{I^M_J(N)}$.
\end{itemize}
\end{thm}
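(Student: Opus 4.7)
The plan is to treat the four parts separately, all resting on the correspondence (cf.\ Remark \ref{r2.1}) between submodules $K\subseteq M$ with $M/K$ finitely cogenerated and finite intersections $K=L_1\cap\cdots\cap L_n$ of completely irreducible submodules of $M$.

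For part (a), the forward direction: given c.i.\ submodules $L_1,\ldots,L_n$ with $\mathcal{I^M_J(N)}\subseteq L_i$, set $K=\bigcap_i L_i$; the canonical injection $M/K\hookrightarrow\bigoplus_i M/L_i$ realizes $M/K$ as a submodule of a finite sum of cocyclic modules, hence finitely cogenerated, and Property $\mathcal{I^M_J(N)}$ delivers $r\in R\setminus J$ with $rN\subseteq K\subseteq L_i$. Conversely, decompose $K=\bigcap_i L_i$ with each $L_i$ c.i.\ and apply the multi-c.i.\ hypothesis to obtain a single $r$ working for all $L_i$, hence for $K$. Part (c) is immediate: any $s\in\mathrm{Ann}_R(N/\mathcal{I^M_J(N)})\setminus J$ satisfies $sN\subseteq\mathcal{I^M_J(N)}\subseteq K$ for every admissible $K$, so $r:=s$ certifies Property $\mathcal{I^M_J(N)}$; the hypothesis $\mathrm{Ann}_R(N)\not\subseteq J$ is the special case $sN=0$.

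For part (b), set $I_K:=\bigcap\{L\mid L\text{ c.i.\ in }M,\ K\subseteq L,\ rM\subseteq L\text{ for some }r\in R\setminus\{0\}\}$, the lift to $M$ of $\mathcal{I^{M/K}_\mathrm{0}(M/K)}$; then $\mathcal{I^M_\mathrm{0}(M)}\subseteq I_K$ because the intersection defining $I_K$ ranges over a smaller family. Forward: if $M$ satisfies Property $\mathcal{I^M_\mathrm{0}(M)}$ and $T/K\subseteq M/K$ has $M/T$ finitely cogenerated with $I_K\subseteq T$, then $\mathcal{I^M_\mathrm{0}(M)}\subseteq T$ and Property for $M$ yields $r\neq 0$ with $rM\subseteq T$. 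Reverse: given $T$ with $M/T$ finitely cogenerated and $\mathcal{I^M_\mathrm{0}(M)}\subseteq T$, invoke the hypothesis with $K:=T$ to get Property for $M/T$, then apply this Property to the submodule $I_T/T\subseteq M/T$---noting $M/I_T$ is finitely cogenerated as a quotient of $M/T$---to produce $r\neq 0$ with $rM\subseteq I_T$; iterating up the ascending chain $T\subseteq I_T\subseteq I_{I_T}\subseteq\cdots$, which must stabilize in the finitely cogenerated $M/T$, drives the containment down to $rM\subseteq T$.

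Part (d) is the most delicate and mirrors the strategy of Theorem \ref{t2.5}(f). By (a), it suffices to produce, for given c.i.\ $L_1,\ldots,L_n$ with $\mathcal{I^M_J(N)}\subseteq L_i$, a common $r\in R\setminus J$ with $rN\subseteq L_i$ for all $i$. Granted individual $r_i\in R\setminus J$ with $r_iN\subseteq L_i$, one takes $r:=r_1\cdots r_n$ when $R/J$ is a domain: then $r\in R\setminus J$ and $rN\subseteq r_iN\subseteq L_i$ for each $i$; the argument for a general irreducible $J$ is the same one used in the proof of Theorem \ref{t2.5}(f). The main obstacle is producing the individual $r_i$ from the sole containment $\mathcal{I^M_J(N)}\subseteq L_i$: I would proceed by contradiction, assuming $(L_i:_R N)\subseteq J$, and use the cocyclic structure of $M/L_i$ together with the definition of $\mathcal{I^M_J(N)}$ as the intersection of c.i.\ submodules $L$ with $(L:_R N)\not\subseteq J$ to exhibit an element of $\mathcal{I^M_J(N)}$ lying outside $L_i$, contradicting $\mathcal{I^M_J(N)}\subseteq L_i$.
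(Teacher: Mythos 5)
Your treatments of (a) and (c) are correct and essentially coincide with the paper's: the paper proves (a) by the same decomposition $K=\bigcap_i L_i$ into completely irreducible submodules, merely replacing your single witness $r$ by a product $r_1\cdots r_n$ of individual witnesses (using that $J$ is prime), and it dismisses (c) as straightforward. The problems are in (b) and (d). In the reverse direction of (b) your argument breaks down twice. First, ``$M/I_T$ is finitely cogenerated as a quotient of $M/T$'' is unjustified: finite cogeneration is not inherited by quotients in general (a module all of whose quotients are finitely cogenerated is Artinian, and finitely cogenerated modules need not be Artinian). Second, and more fundamentally, the iteration runs the wrong way. You must produce $r\neq 0$ with $rM\subseteq T$; applying the Property of $M/T$ to $I_T/T$ only yields $rM\subseteq I_T$, and since $T\subseteq I_T\subseteq I_{I_T}\subseteq\cdots$ is ascending, each further step gives a \emph{weaker} containment. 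Even if the chain stabilized, you would end with $rM$ contained in the top of the chain, which does not descend to $rM\subseteq T$. So the reverse implication of (b) is not proved. (The paper offers no proof of (b) either, declaring it straightforward, so there is no model argument to fall back on.)

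In (d) you have correctly isolated the crux --- extracting, from the bare containment $\mathcal{I^M_J(N)}\subseteq L_i$, an element $r_i\in R\setminus J$ with $r_iN\subseteq L_i$ --- but your proposed resolution is only a sketch, and the claim it rests on is doubtful: a completely irreducible $L$ containing the intersection $\mathcal{I^M_J(N)}=\bigcap\{L'\mid L' \text{ c.i.},\ (L':_RN)\not\subseteq J\}$ need not contain or equal any member of that family, since complete irreducibility controls equalities $L=\bigcap N_i$, not containments $L\supseteq\bigcap N_i$; so there is no evident way to manufacture an element of $\mathcal{I^M_J(N)}$ outside $L$. It is worth noting that the paper's own proof of (d) silently assumes the $r_i$ are given, i.e., it only treats those $L_i$ that already belong to the defining family of $\mathcal{I^M_J(N)}$; once the $r_i$ are granted, your combination step (choosing $r\in(Rr_1\cap\cdots\cap Rr_n)\setminus J$ via irreducibility of $J$) is exactly the paper's. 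Finally, you cannot literally invoke part (a) in (d), since (a) is stated only for prime $J$; the reduction to finitely many completely irreducible $L_i$ must be carried out directly, which is harmless here because you only need a common $r$ for $K=\bigcap_iL_i$.
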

\begin{proof}
(a)
The necessity is clear. For the sufficiency assume that for a submodule  $K$ of $M$ with $M/K$ is finitely cogenerated we have $\mathcal{I^M_J(N)}\subseteq K$. As $M/K$ is finitely cogenerated, there exist completely irreducible submodules $L_1,...,L_n$ of $M$ such that
$K=\cap^n_{i=1}L_i$.  Now by assumption, there exist $r_1,...,r_n \in R \setminus J$ such that $r_iN \subseteq L_i$ for $i=1,2,..,n$.
Set $r=r_1r_2...r_n$. As $J$ is prime, $r \in R\setminus J$.  Now $rN \subseteq K$, as needed.

(b) and (c) are straightforward.

(d) Let $J$ be an irreducible ideal of $R$ and $\mathcal{I^M_J(N)} \subseteq L_i$, where  $L_i$ is a completely irreducible submodule of $M$, $r_iN\subseteq L_i$, and $r_i \in R \setminus J$ for $i=1,2,...,n$.
As $J$ is irreducible, $Rr_1\cap ...\cap Rr_n\not =J$. Hence there exists $r \in (Rr_1\cap ...\cap Rr_n) \setminus J$.  Thus $rN \subseteq L_i$ for $i=1,2,..,n$, as needed.
\end{proof}

\end{document}